\providecommand{\keywords}[1]{\textbf{Key words. } #1}
\providecommand{\amssubj}[1]{\textbf{AMS subject classification. } #1}
\newcommand{\email}[1]{Email: #1}
\DeclareMathOperator*{\diago}{diag}
\newtheoremstyle{mystyle}
  {}
  {}
  {\itshape}
  {}
  {\bfseries}
  {.}
  { }
  {}
\theoremstyle{mystyle}
\newtheorem{definition}{Definition}
\newtheorem{example}{Example}
\newtheorem{remark}{Remark}
\newtheorem{corollary}{Corollary}
\definecolor{green_plot1}{rgb}{.4,.7,0.2}
\definecolor{blue_plot1}{rgb}{0,0,1}
\definecolor{red_plot1}{rgb}{1,0,0}
\definecolor{cyan_plot1}{rgb}{0,0.8,0.8}
\title{A Low-rank Method for Parameter-dependent Fluid-structure Interaction Discretizations with Hyperelasticity}
\author{Peter~Benner\thanks{Corresponding author}
    \thanks{Max Planck Institute for
Dynamics of Complex Technical Systems, Magdeburg and Otto von Guericke
University Magdeburg, Germany, (\email{benner@mpi-magdeburg.mpg.de})}
\and Thomas~Richter\thanks{Otto von Guericke University Magdeburg,
  Germany} \and 
Roman~Weinhandl\thanks{This work was mainly completed while
    this author was with Otto von Guericke University Magdeburg and
    Max Planck Institute for Dynamics of Complex Technical Systems,
    Magdeburg, Germany} }
\begin{document}

\date{}
\maketitle

\begin{abstract}
Fluid-structure interaction models are used to study how a material interacts with different fluids at different Reynolds numbers. Examining the same model not only for different fluids but also for different solids allows to optimize the choice of materials for construction even better. A possible answer to this demand is parameter-dependent discretization. Furthermore, low-rank techniques can reduce the complexity needed to compute approximations to parameter-dependent fluid-structure interaction discretizations.

Low-rank methods have been applied to parameter-dependent linear fluid-structure interaction discretizations. The linearity of the operators involved allows to translate the resulting equations to a single matrix equation. The solution is approximated by a low-rank method. In this paper, we propose a new method that extends this framework to nonlinear parameter-dependent fluid-structure interaction problems by means of the Newton iteration. The parameter set is split into disjoint subsets. On each subset, the Newton approximation of the problem related to the upper median parameter is computed and serves as initial guess for one Newton step on the whole subset. This Newton step yields a matrix equation whose solution can be approximated by a low-rank method.

The resulting method requires a smaller number of Newton steps if
compared with a direct approach that applies the Newton iteration to
the separate problems consecutively. In the experiments considered,
the proposed method allows to compute a low-rank approximation up to
twenty times faster than by the direct approach.
\end{abstract}

\keywords{Parameter-dependent fluid-structure interaction, low-rank, ChebyshevT, tensor, Newton iteration} \vspace{.4cm}
\par
\amssubj{15A69, 49M15, 65M22, 74F10}



\section{Introduction}\label{section_introduction1}

Fluid-structure interactions (FSI) play a crucial role in various applications, ranging from classical problems of aeroelasticity to naval engineering, biology and medicine, compare~\cite{BazilevsTakizawaTezduyar2013,Ric17} for examples. 
Simulations of fluid-structrure interactions serve to supplement or replace experiments. The two characteristic difficulties of fluid-structure interactions are the very stiff coupling arising from the interplay of a parabolic (fluid) and a hyperbolic equation (structure), and the moving and a priori unknown solution domain resulting from the motion and deformation of the structure. Finally, a nonlinear, poorly structured system of partial differential equations must be solved. 

Two approaches are generally considered to approximate the coupled
system. First, partitioned methods, which are based on an external
coupling of a flow model and a structural model. These methods are
highly flexible and efficient discretizations and solvers can be
considered for the particular subproblems. However, partitioned
methods suffer from stability problems, especially when problems with
pronounced added-mass effect are
considered~\cite{CausinGerbeauNobile2005}. This effect emerges when
the densities of fluid and structure are similar, for example in
shipbuilding applications or in biomedical problems. The alternative
monolithic approach considers the entire system as a single entity,
generally discretized using implicit methods. It results in a highly
robust description, which, however, leads to algebraic systems of
equations of great complexity, whose solution (in the nonlinear as
well as in the linear case) is extremely costly, compare~\cite{HronTurekMadlikEtAl2010,FailerRichter2019}. For a comparison of both approaches we refer to Heil et al.~\cite{HeilHazelBoyle2008}. Here we follow the monolithic approach, also because it allows us to formulate the parameter-dependent problem in a single unit.

The other problem, i.e., the motion of the domain, is usually addressed either by a transformation into a common coordinate system, or, especially in partitioned approaches, the coupling condition is realized by interpolation between the two subproblems. 
However, if a common coordinate system is chosen as the basis of the
description, on the one hand an Eulerian description is
suitable~\cite{Dunne2006,CottetMaitreMilcent2006,Richter2013,Pironneau2016},
i.e., both the flow and structure problem are described on the changing Eulerian domain, or on the other hand a fixed reference domain is chosen for both subproblems. This second framework is called the Arbitrary Lagrangian Eulerian (ALE)  coordinate system~\cite{Donea1982,HronTurek2006a,RichterWick2010}, which is based on a transformation of the Navier-Stokes equations into a fixed reference system.
We take a simple route here and assume that the deformation of the fluid domain is small, and thus the geometric nonlinearity does not play a crucial role, so the ALE transformation can be considered as an identity. In the elasticity model, on the other hand, we will consider the usual nonlinear strain tensor.

Here we target parameter-dependent fluid-structure interaction
problems in order to use the solution later in an optimization or
design process. Alternatively, an optimization problem with
fluid-structure interactions as constraints could be considered
directly. This approach has already been described in the
literature~\cite{Bertoglio12,RichterWickOpt,FTOpt2020}, but it remains
very complex due to the great complexity of the coupled system. For
technically realistic scenarios, no efficient and at the same time
accurate solution exists so far. Therefore, in this work we develop a
novel low-rank technique that solves the parametric problem using a
low-rank format in an all-at-once approach. For this purpose, we follow the ideas laid
out for the linear case in \cite{WeiBR20} and embed this in a
Newton iteration for the nonlinear problem considered here. 

We state the weak formulation of the nonlinear FSI problem in \cref{chap_weak_form1}. In \cref{chap_pardep_newton1}, the parameter-dependent discretization is linearized by means of the Newton iteration, and a Newton step is, for a subset of problems, translated to a matrix equation. We then discuss the approximability of the unknown, a matrix, by low-rank methods in \cref{chap_lrank_meth_and_gen1}, before we derive and code the novel algorithm in \cref{algorithm_one1}. The $3d$ FSI test case from \cite[Section 8.3.2.2]{Ric17} is considered for a numerical comparison of \cref{algorithm_one1} with standard Newton iterations in \cref{chap_numerical_example1}. Variants of \cref{algorithm_one1} are discussed in \cref{section_fixpt_and_alt_cluster}.

\section{The stationary nonlinear fluid-structure interaction problem}\label{chap_weak_form1}
Let $\Omega$, $F$, $S \subset \mathbb{R}^d$ be open for $d \in \{2,3\}$ such that $\bar{F}  \cup \bar{S}=\bar\Omega$ and $S \cap F = \emptyset$. We are interested in an FSI problem that uses the stationary Navier-Stokes equations \cite[Section 2.4.5.3]{Ric17} to model the fluid part $F$. On the solid part $S$, we use the stationary model of a Saint Venant-Kirchhoff material \cite[Definition 2.18]{Ric17}. By $\Gamma_{\text{int}}=\partial F \cap \partial S$, we denote the interface. The outer boundary $\partial\Omega$ is split into the Dirichlet part $\Gamma^F\subset \partial\Omega$ and the fluid outflow part $\Gamma_f^{\text{out}}\subset \partial F \cap \partial\Omega$, where a Neumann outflow conditions of do-nothing type holds~\cite{Heywood1996}. We denote the $\mathcal{L}^2$ scalar product on $F$ and $S$ by $\langle \cdot, \cdot \rangle_F$ and $\langle \cdot , \cdot \rangle_S$, respectively. The weak formulation of the coupled nonlinear FSI problem with a vanishing right hand side is given by
\begin{equation}
\label{equation_fsi_problem_weak1}
\begin{aligned}
\langle \nabla \cdot v, \xi  \rangle_F&=0\text{,}\\
\mu_s \langle \nabla u + \nabla u^T+\nabla u^T \nabla u, \nabla \varphi \rangle_S +\lambda_s \langle \operatorname{tr}(\nabla u+\frac{1}{2}\nabla u^T \nabla u)I, \nabla \varphi \rangle_S\quad\\
+\rho_f \langle (v \cdot \nabla)v, \varphi  \rangle_F + \nu_f \rho_f \langle \nabla v+\nabla v^T, \nabla \varphi \rangle_F- \langle p, \nabla \cdot \varphi \rangle_F &=0 \quad \text{and}\\
\langle \nabla u, \nabla \psi  \rangle_F&=0\text{,}
\end{aligned}
\end{equation}
where $p \in L^2(F)$ denotes the pressure, $v \in v_{\text{in}} + H_0^1(F, \Gamma_f^D\cup\Gamma_\text{int})^d$ the velocity and  $u \in H_0^1(\Omega)^d$ the deformation. By $v_{\text{in}}\in H^1(F)^d$ we introduce an extension of the Dirichlet data on $\Gamma^D$ into the fluid domain. For the test functions of the divergence equation, momentum and deformation equation we assume $\xi \in L^2(F)$, $\varphi \in H_0^1(\Omega, \Gamma^D)^d$ and $\psi \in H_0^1(F)^d$, respectively. $H_0^1(\Omega,\Gamma^D)^d$ denotes the $L^2(\Omega)^d$ functions that are weakly differentiable and have a trace equal to zero on $\Gamma^D$. The weakly differentiable functions  $H_0^1(\Omega)^d$ have a trace equal to zero on $\partial \Omega$. The fluid parameters involved are the fluid density $\rho_f \in \mathbb{R}$ and the kinematic fluid density $\nu_f \in \mathbb{R}$. The Saint Venant-Kirchhoff model equations involve the first Lam\'e parameter $\lambda_s \in \mathbb{R}$ and the shear modulus $\mu_s \in \mathbb{R}$. The interface coupling conditions at $\Gamma_\text{int}=\partial S\cap \partial F$ are implicitly included in~\eqref{equation_fsi_problem_weak1}. The geometric condition $u|_F=u|_S$ on $\Gamma_{\text{int}}$ follows from the global approach $u\in H^1_0(\Omega)^d$ and likewise, the kinematic condition $v=0$ on $\Gamma_\text{int}$ is embedded into the trial space. The dynamic condition
\[
\Big(\mu_s\big(\nabla u+\nabla u^T+\nabla u^T\nabla u\big)+\lambda_s\operatorname{tr}\big(\nabla u+\frac{1}{2}\nabla u^T\nabla u\big)I\Big)n_S+
\Big(\nu_f\rho_f\big(\nabla v+\nabla v^T\big)-pI\Big)n_F=0
\]
follows with integration by parts as the common test function $\phi$ is globally defined. We refer to~\cite[Sec. 3.4]{Ric17} for details on the derivation of the variational formulation. By $n_S$ and $n_F$ we denote outward facing unit normal vectors on $\partial S$ and $\partial F$, respectively.

\subsection{Finite element formulation and Newton iteration}

We will consider monolithic finite element discretizations of the coupled fluid-structure interaction problem~\eqref{equation_fsi_problem_weak1}. To keep the notation simple we assume that velocity $v_h\in V_h\subset H_0^1(\Omega,\Gamma_f^D)^d)$, deformation  $u_h\in W_h\subset H^1_0(\Omega)^d$ and pressure $p_h\in L_h\subset L^2(F)$ are conforming finite element spaces, where the pair $V_h\times L_h$ satisfies the inf-sup condition on $F$. We refer to the literature for details and peculiarities on finite element discretizations of FSI problems~\cite{Donea1982,RichterWick2010,BazilevsTakizawaTezduyar2013,Ric17}. In the following we will just skip the index $h$ referring to the spatial discretization. The arising nonlinear algebraic problems will be solved by the Newton iteration, which usually shows optimal convergence for fluid-structure interactions~\cite{HronTurek2006a,FailerRichter2019}. 

The solution components are combined to one single variable $x=(v_h,u_h,p_h)\in X_h:=V_h\times W_h\times L_h$ with $M:=\operatorname{dim}(X_h)$ and the Newton iteration $j-1\mapsto j$ is formally given by
\begin{equation}\label{equation_intro_linsys1}
  \begin{aligned}
    \Big( A_0+\mu_sA_1+\rho_f\nu_f A_2  +\rho_fJ_{\rho}(x_{j-1})+\mu_s J_\mu(x_{j-1})+\lambda_s J_\lambda(x_{j-1}) \Big)\delta x_j&=b_D-g(x_{j-1}),\\
     x_j &= x_{j-1}+\delta x_j.
  \end{aligned}
\end{equation}
The matrices $A_0$, $A_1$, $A_2 \in \mathbb{R}^{M \times M}$ are discrete linear operators, $J_\rho(\cdot)$, $J_\mu(\cdot)$, $J_\lambda(\cdot) \in \mathbb{R}^{M \times M}$ are discrete Jacobian matrices of nonlinear operators evaluated at the argument given.
$g(\cdot)$ evaluates the variational residual at the argument. The Jacobian matrices $J_\rho(\cdot)$, $J_\mu(\cdot)$ and $J_\lambda(\cdot)$ depend on $x_{j-1}$, the approximation of the previous linearization step.
Solving the linear systems is challenging due to the bad conditioning of the Jacobian and the missing structure, the matrix is neither symmetric nor positive definite. The coupled system includes the Navier-Stokes equation and is hence of a saddle-point type. Linear solvers are well investigated~\cite{Gee2010,Richter2015,Deparis2016,Aulisa2018,FailerRichter2019} but in two dimensions, direct solvers are also applicable.

\subsection{Parameter dependent formulation}

Since the behavior of an FSI model varies if parameters such as the shear modulus or the kinematic fluid viscosity changes, we are interested in a parameter-dependent discretization with respect to $m \in \mathbb{N}$ different parameter combinations $(\rho_f^i,\nu_f^i,\lambda_s^i,\mu_s^i)$ for $i\in \{1,\dots,m\}$. Here we will consider variations of the shear moduli $\mu_s^i$ and the kinematic fluid viscosity $\nu_f^i$. Hence, the nonlinear problem~\eqref{equation_intro_linsys1} is to be solved for $m$ solutions $x^1,x^2,\dots,x^m\in X_h$ each of them being $M$-dimensional. For a given parameter choice $i \in \{1,...,m\}$, at the Newton step $j \in \mathbb{N}$, the Jacobians $J_\rho(\cdot)$, $J_\mu(\cdot)$ and $J_\lambda(\cdot)$ depend on $x^i_{j-1}$, the approximation of the previous linearization step, and, therefore, on the parameter index $i$.  

Due to this nonlinear dependency, the parameter dependent equation can not be translated to a matrix equation similar to the linear case discussed in \cite{WeiBR20}. Instead, what we propose in this paper is to split the parameter set into $K \in \mathbb{N}$ disjoint subsets:
\begin{align*}
S^{\mu,\nu}:=\{(\mu_s^i, \nu_f^i)\}_{i \in \{1,...,m\}} = \bigcupdot \limits_{k=1}^K \mathcal{I}_k \text{.}
\end{align*}
The subsets have to be chosen such that the problems that are clustered within one subset $\mathcal{I}_k$ do not differ too much from each other. Details regarding the clustering will be discussed later. For each subset $\mathcal{I}_k$, $\tilde{x}^k_{\epsilon_N}\in \mathbb{R}^M$, the Newton approximation of the problem related to the upper median index, is computed up to a given accuracy $\epsilon_N>0$. $\tilde{x}^k_{\epsilon_N}$ is then used as initial guess for one Newton step on the whole subset $\mathcal{I}_k$. This translates to a Newton step that can be written as the following matrix equation. For all $k \in \{1,...,K\}$, find $S_k \in \mathbb{R}^{M \times |\mathcal{I}_k|}$ such that
\begin{multline}\label{equation_intro_mat1}
  A_0 S_k+A_1S_kD_\mu^k+\rho_f A_2 S_k D_\nu^k + \rho_f J_\rho(\tilde{x}^k_{\epsilon_N})S_k+J_\mu(\tilde{x}^k_{\epsilon_N})S_k D_\mu^k+\lambda_s J_\lambda(\tilde{x}^k_{\epsilon_N})S_k\\
  =\big(b_D-g(\tilde{x}^k_{\epsilon_N},0,0)\big) \otimes (1,...,1)-\big(A_1 \tilde{x}_{\epsilon_N}^k +g_\mu(\tilde{x}^k_{\epsilon_N})\big)\otimes \diago(D_\mu^k)^T
-\rho_f A_2 \tilde{x}^k_{\epsilon_N} \otimes \diago(D_\nu^k)^T,
\end{multline}
where 
\begin{align}\label{equation_diag_intro1}
D_\mu^k:=\diago \limits_{(\mu_s^i,\nu_f^i) \in \mathcal{I}_k}(\mu_s^i) \text{,} \qquad D_\nu^k:=\diago \limits_{(\mu_s^i, \nu_f^i) \in \mathcal{I}_k}(\nu_f^i) \in \mathbb{R}^{|\mathcal{I}_k|\times|\mathcal{I}_k|} \qquad \text{and}
\end{align}
$g_\mu(\cdot)$ evaluates the residual of the operator that is, in the FSI problem, multiplied with the shear modulus. $S_k$ in (\ref{equation_intro_mat1}) is the Newton update for all problems related to the parameter set $\mathcal{I}_k$ and can be approximated by a low-rank method such as the GMRESTR or the ChebyshevT method from \cite[Algorithm 2, Algorithm 3]{WeiBR20}. The low-rank approximation $\hat{S}_k$ to (\ref{equation_intro_mat1}) is then added to the initial guess on $\mathcal{I}_k$. We obtain the Newton approximation
\begin{align*}
\hat{X}_k:=\tilde{x}^k_{\epsilon_N}\otimes (1,...,1)+\hat{S}_k \qquad \text{for all } \qquad k \in \{1,...,K\} \text{.}
\end{align*}
The global approximation to the parameter-dependent problem is
\begin{align*}
\hat{X}:=[\hat{X}_1 | ... | \hat{X}_K] \text{.}
\end{align*}
If $\hat{S}_k$ has rank $R_k$ for all $k \in \{1,...,K\}$, the rank of $\hat{X}$ is at most $K+\sum \limits_{k=1}^K R_k$.

To obtain $\tilde{x}^k_{\epsilon_N}$, multiple Newton steps of the standard FSI problem~\eqref{equation_intro_linsys1} are needed. All $K$ of them can be computed in parallel. On the subset itself, we perform only one Newton step for the matrix equation.

\section{Discretization and solution of the matrix-Newton iteration}\label{chap_pardep_newton1}
For the sake of readability, we restrict to the case of a parameter-dependent discretization with respect to two parameters. In \cref{chap_extension_further1}, we explain how to extend the resulting method to further parameters. The parameters of interest are the $m_1 \in \mathbb{N}$ shear moduli
and the $m_2 \in \mathbb{N}$ kinematic fluid viscosities
\begin{equation}
\{\mu_s^{i_1}\}_{i_1 \in \{1,...,m_1\}} \subset \mathbb{R},\quad
\{ \nu_f^{i_2} \}_{i_2 \in \{1,...,m_2\}} \subset \mathbb{R}.
\end{equation}
With this choice we face, in total, $m=m_1m_2$ different FSI problems.

Consider a finite element discretization of (\ref{equation_fsi_problem_weak1}) on a triangulation $\Omega_h$, a matching mesh \cite[Definition 5.9]{Ric17} of the domain $\Omega$. If $N \in \mathbb{N}$ denotes the number of mesh nodes in $\Omega_h$, the total number of degrees of freedom $M \in \mathbb{N}$ that results after discretization is equal to $M=5N$ if $d=2$ and $M=7N$ if $d=3$ (one pressure and $d$ velocities and deformations, see \cite[Chapter 3]{WeiBR20}). Similar to the linear case, we first need the discrete differential operator (discretization matrix) $A_0 \in \mathbb{R}^{M \times M}$. $A_0$ discretizes all the linear operators involved in (\ref{equation_fsi_problem_weak1}) with a fixed shear modulus $\mu_s \in \mathbb{R}$ and a fixed kinematic fluid viscosity $\nu_f \in \mathbb{R}$. To be clear, restricted to the momentum equation,
\begin{align*}
A_0 \qquad \text{discretizes} \qquad \mu_s \langle \nabla u + \nabla u^T, \nabla \varphi \rangle_S+\lambda_s \langle \operatorname{tr}(\nabla u)I, \nabla \varphi  \rangle_S+\nu_f \rho_f \langle \nabla v+\nabla v^T, \nabla \varphi \rangle_F-\langle p, \nabla \cdot \varphi \rangle_F \text{.}
\end{align*}
Furthermore, we need the discrete operators $A_1$, $A_2 \in \mathbb{R}^{M \times M}$ such that
\begin{align*}
A_1 \qquad \text{discretizes} \qquad &\langle \nabla u + \nabla u^T, \nabla \varphi \rangle_S \qquad \text{and}  \qquad
A_2 \qquad \text{discretizes} \qquad  \langle \nabla v + \nabla v^T, \nabla \varphi \rangle_F \text{.}
\end{align*}
In our discrete finite element space, every unknown 
\begin{equation}\label{equalorder}
  x_h=\left( \begin{array}{c}p_h \\ v_h \\ u_h  
  \end{array} \right) \in X_h=V_h\times W_h\times L_h
\end{equation}
consists of a discrete pressure $p_h \in L_h$, a discrete velocity and deformation that are $v_h\in V_h$, $u_h \in W_h$.
Since all the operators discretized by the matrices $A_0$, $A_1$ and $A_2$ are linear, the corresponding operators can be discretized by choosing appropriate trial functions. The application  of linear operators, if the argument is contained in the finite element space of dimension $M$, is possible via matrix-vector products. For the nonlinear operators in (\ref{equation_fsi_problem_weak1})
\begin{align*}
\langle \nabla u^T \nabla u, \nabla \varphi \rangle_S
\text{,} \qquad \langle \operatorname{tr} (\nabla u^T \nabla u^T) I, \nabla \varphi \rangle_S \qquad \text{and} \qquad \langle (v \cdot \nabla) v, \varphi \rangle_F  \text{,}
\end{align*}this is not possible. 
\subsection{The Jacobian matrices}\label{chapter_jacobianmatrices1}
To linearize the nonlinear operators in (\ref{equation_fsi_problem_weak1}), we apply the Newton iteration \cite[Section 7.1.1]{book_quarteroni_newton1} due to its relevance and fast convergence rate when it comes to finite elements for the Navier-Stokes equations (compare \cite[Section 4.4]{Ric17} and \cite[Remark 6.44]{book_john_inc_flows1}). As an alternative, the Picard (fixed-point) iteration will be discussed in \cref{chap_picard_iteration1}. For linearization via the Newton iteration, we introduce the discrete Jacobian matrices for the nonlinear operators in (\ref{equation_fsi_problem_weak1}). $J_\rho(x_h) \in \mathbb{R}^{N \times N}$ is the discrete Jacobian matrix of
\begin{align*}
&J_{\langle (v \cdot \nabla)v, \varphi \rangle_F}(x_h)= \left( \begin{array}{ccc} 0 & 0 & 0\\
0& \frac{\partial \langle (v \cdot \nabla)v, \varphi \rangle_F}{\partial v}_{\big|_{v=v_h}} &0\\
0&0&0
\end{array}  \right) \in \mathbb{R}^{N \times N}
\end{align*}
$J_\mu(x_h) \in \mathbb{R}^{N \times N}$ the one of
\begin{align*}
J_{ \langle \nabla u+\nabla u^T , \nabla \varphi \rangle_S }(x_h)=\left( \begin{array}{ccc} 0& 0 &0\\
0& 0& \frac{\partial \langle \nabla u^T \nabla u, \nabla \varphi \rangle_S }{\partial u}_{\big|_{u=u_h}}
\\
0& 0& 0 
\end{array}  \right)
\in \mathbb{R}^{N \times N}
\end{align*}
and $J_\lambda(x_h) \in \mathbb{R}^{N \times N}$ the one of
\begin{align*}
J_{ \frac{1}{2}\langle \operatorname{tr}(\nabla u^T \nabla u )I, \nabla \varphi \rangle_S}(x_h)=\left(  \begin{array}{ccc}
0&0&0\\
0&0&\frac{\frac{1}{2}\partial \langle \operatorname{tr}(\nabla u^T \nabla u)I, \nabla \varphi \rangle_F }{\partial u}_{\big|_{u=u_h}} \\
0 &0&0
\end{array}    \right)\in \mathbb{R}^{N \times N} \text{.}
\end{align*}
\subsection{The Newton iteration}
At first, we fix $(i_1,i_2) \in \Xi^{m_1,m_2} := \{1,...,m_1\} \times \in \{1,...,m_2\}$ and take a look at the Newton iteration for the problem related to a shear modulus with value $\mu_s^{i_1}$ and a kinematic fluid viscosity of $\nu_f^{i_2}$. Even though the right hand side in (\ref{equation_fsi_problem_weak1}) vanishes, we incorporate the Dirichlet boundary conditions into the right hand side vector $b_D \in \mathbb{R}^M$. As initial guess, we choose $x^{i_1,i_2}_0:=b_D$. Let $g(x_{j}^{i_1,i_2},\mu_s^{i_1},\nu_f^{i_2})$ denote the residual of the problem (\ref{equation_fsi_problem_weak1}) with a shear modulus $\mu_s^{i_1}$ and a kinematic fluid viscosity $\nu_f^{i_2}$, evaluated at $x_j^{i_1,i_2} \in \mathbb{R}^M$ and let
\begin{equation}\label{equation_pardep_matrix1}
\begin{aligned}
A(x_{j-1}^{i_1,i_2},\mu_s^{i_1},\nu_f^{i_2}):=A_0&+ (\mu_s^{i_1}-\mu_s)A_1+(\nu_f^{i_2}-\nu_f)\rho_f A_2+ \mu_s^{i_1}J_\mu(x^{i_1,i_2}_{j-1})\\&+\lambda_s J_\lambda(x^{i_1,i_2}_{j-1})
+\rho_f J_\rho(x_{j-1}^{i_1,i_2}) \text{.}\end{aligned}
\end{equation}
At the Newton step $j \in \mathbb{N}$, we approximate the Newton update $\delta x_j^{i_1,i_2} \in X_h$ such that
\begin{equation}\label{equation_newton_step_indiv1}
\begin{aligned}
A(x_{j-1}^{i_1,i_2},\mu_s^{i_1}, \nu_f^{i_2})\delta x_j^{i_1,i_2} &=b_D-g(x_{j-1}^{i_1,i_2},\mu_s^{i_1},\nu_f^{i_2})\text{.}
\end{aligned}
\end{equation}
The Newton approximation of the Newton step $j$ is given as
\begin{align*}
x_j^{i_1,i_2}:=x_{j-1}^{i_1,i_2} +\delta x_j^{i_1,i_2}.
\end{align*}

Similar to the low-rank framework discussed in \cite{WeiBR20}, we are interested in translating a number of Newton steps of the form (\ref{equation_newton_step_indiv1}) to a single matrix equation. But since the matrix $A(x_{j-1}^{i_1,i_2},\mu_s^{i_1},\nu_f^{i_2})$ from (\ref{equation_pardep_matrix1}) depends, in addition to the parameters $\mu_s^{i_1}$ and $\nu_f^{i_2}$, on the Newton approximation of the previous linearization step, a direct translation like in \cite{WeiBR20} is not possible.

\subsection{Clustering of the parameter set}
\label{chapter_clustering1}

We first split the parameter set
\begin{align*}
S^{\mu, \nu}:=\{(\mu_s^{i_1},\nu_f^{i_2})\}_{\substack{i_1 \in \{1,...,m_1\}\\i_2 \in \{1,...,m_2\}}}  \subset \mathbb{R} \times \mathbb{R}
\end{align*}
into the disjoint subsets $\mathcal{I}_k$, namely
\begin{align*}
\mathcal{S}^{\mu, \nu}=\bigcupdot \limits_{k=1}^K \mathcal{I}_k \text{.}
\end{align*}
As mentioned in the introduction, the problems that are clustered within one subset $\mathcal{I}_k$ should not differ too much from each other. Grouping the right parameters to a cluster can be a challenge and depend on the parameter choice. The elements in the parameter set $S^{\mu,\nu}$ have to be ordered first. We define the most intuitive way to order the parameter set.
\begin{definition}[Little Endian Order]\label{definition_little_endian_order1}
  If for $(i_1, i_2)$, $(l_1,l_2) \in \Xi^{m_1,m_2}$, 
\begin{align*}
(\mu_1^{i_1},\eta_f^{i_2}) < (\mu_1^{l_1},\eta_f^{l_2}) \Leftrightarrow i_1+(i_2-1)m_1 <l_1+(l_2-1)m_1\text{,}
\end{align*}
the grouping operation of the indexes of the elements in $S^{\mu, \nu}$ is \emph{little endian}.


\end{definition}
There are other ways to order the elements in $S^{\mu, \nu}$. In a parameter-dependent discretization with respect to different shear moduli and first Lam\'{e} parameters, we might prefer to order the elements in the parameter set by its Poisson ratios. Even the sizes of the subsets $\{\mathcal{I}_k\}_{k \in \{1,...,K\}}$ may be chosen problem-dependently. In this paper, any clustering results in subsets $\{\mathcal{I}_k\}_{k \in \{1,...,K\}}$ with
\begin{align}\label{equation_clustering1}
|\mathcal{I}_k|=\begin{cases}  \lfloor \frac{m}{K} \rfloor &\text{if} \quad k \in \{1,...,K-1\}  \\ m-(K-1)\lfloor \frac{m}{K} \rfloor & \text{else.}
\end{cases}
\end{align}
\begin{example}\label{example_subsets_example1}
For $m_1=4$, $m_2=5$ and $K=3$, the subsets would be
\begin{equation}
\begin{aligned}
\mathcal{I}_1&=\{(\mu_s^1,\nu_f^1), (\mu_s^2,\nu_f^1),(\mu_s^3,\nu_f^1), (\mu_s^4,\nu_f^1), (\mu_s^1,\nu_f^2),(\mu_s^2,\nu_f^2)\} \text{,}\\
 \quad\mathcal{I}_2&=\{(\mu_s^3,\nu_f^2), (\mu_s^4,\nu_f^2),(\mu_s^1,\nu_f^3), (\mu_s^2,\nu_f^3), (\mu_s^3,\nu_f^3),(\mu_s^4,\nu_f^3)\} \qquad \text{and}\\
 \mathcal{I}_3&=\{(\mu_s^1,\nu_f^4), (\mu_s^2,\nu_f^4),(\mu_s^3,\nu_f^4), (\mu_s^4,\nu_f^4), (\mu_s^1,\nu_f^5),(\mu_s^2,\nu_f^5),(\mu_s^3,\nu_f^5), (\mu_s^4,\nu_f^5)\} \text{.}
\end{aligned}
\end{equation}
\end{example}
Details regarding clustering of the parameter set will be discussed in \cref{chap_alternative_clustering1}.
\begin{definition}[Upper Median Index]We call the index or multi-index that corresponds to the upper median of a set when ordered as defined in \cref{definition_little_endian_order1} the \emph{upper median index}.
\end{definition}
\begin{example}For the sets in \cref{example_subsets_example1}, the upper median index of $\mathcal{I}_1$ is $\tilde{m}^1=(4,1)$, the one of $\mathcal{I}_2$ is $\tilde{m}^2=(2,3)$ and the upper median index of $\mathcal{I}_3$ is $\tilde{m}^3=(1,5)$.
\end{example}

\subsection{A single Newton step formulated as matrix equation}\label{chapter_newton_step_formul1}

Let $\tilde{m}^k=(\tilde{m}^k_1,\tilde{m}^k_2) \in \mathbb{N} \times \mathbb{N}$ be the upper median index of the parameter set $\mathcal{I}_k$ for $k \in \{1,...,K\}$. Furthermore, let $x_{\epsilon_N}^{\tilde{m}^k}$ be the Newton approximation of the discretized FSI problem with a shear modulus $\mu_s^{\tilde{m}^k_1}$ and a kinematic fluid viscosity  $\nu_f^{\tilde{m}^k_2}$. $\epsilon_N>0$ defines the accuracy of the Newton approximation $x_{\epsilon_N}^{\tilde{m}^k}$. To be more precise, the stopping  criterion for the Newton iteration is fulfilled whenever the residual norm is smaller than $\epsilon_N$.
\begin{definition}[Operator $\diago$]We extend the definition \cite[Section 1.2.6]{book_golub_matrixcomp1} to
\begin{align}\label{equation_diagoblockdef1}
\diago \limits_{i \in \{1,...,m\}} (A_i)=\left(\begin{array}{ccc} A_1& &  \substack{\mbox{}\\ \text{\LARGE$0$} }\\\
&\ddots&\\
\text{\LARGE$0$} &&A_m
\end{array}\right)\in \mathbb{R}^{Mm\times Mm} 
\end{align} 
if the argument of the operator $\diago(\cdot)$ is a set of matrices
\begin{align*}
A_i \in \mathbb{R}^{M \times M} \qquad \text{for all} \qquad i \in \{1,...,m\}\text{.}
\end{align*}
(\ref{equation_diagoblockdef1}) is block diagonal. If the argument of the operator $\diago(\cdot)$ is a single square matrix, we have
\begin{align*}
\diago(A)=\left( \begin{array}{c}a_{11}\\ \vdots \\ a_{MM}
\end{array} \right)\in \mathbb{R}^M \qquad \text{for} \qquad A=\left(\begin{array}{ccc} a_{11} & \cdots &a_{1M}\\
\vdots & \ddots & \vdots\\
a_{M1}& \cdots & a_{MM}
\end{array} \right) \in \mathbb{R}^{M \times M}\text{.}
\end{align*}
\end{definition}
\begin{definition}[Diagonal Matrices]\label{remark_diag_matrices1}
Similar to the diagonal matrices from (\ref{equation_diag_intro1}),
\begin{align*}
D_{\mu}^k=\diago \limits_{(\mu_s^{i_1},\nu_f^{i_2})\in \mathcal{I}_k}(\mu_s^{i_1})\qquad \text{and} \qquad D_{\nu}^k=\diago \limits_{(\mu_s^{i_1},\nu_f^{i_2})\in \mathcal{I}_k}(\nu_f^{i_2})\in \mathbb{R}^{|\mathcal{I}_k| \times |\mathcal{I}_k|}\text{,}
\end{align*}
 for $k \in \{1,...,K\}$, we define
\begin{align*}
D_{\mu-}^k:=D_\mu^k-\mu_s I^{|\mathcal{I}_k| \times |\mathcal{I}_k|} \qquad \text{and} \qquad D_{\nu-}^k:=D_\nu^k-\nu_fI^{|\mathcal{I}_k| \times |\mathcal{I}_k|} \in \mathbb{R}^{|\mathcal{I}_k| \times |\mathcal{I}_k|}\text{,}
\end{align*}
where $I^{|\mathcal{I}_k| \times |\mathcal{I}_k|}$ denotes the $|\mathcal{I}_k| \times |\mathcal{I}_k|$ identity matrix.
\end{definition} 
With this notation, we formulate a Newton step on the whole subset $\mathcal{I}_k$ that uses $x_{\epsilon_N}^{\tilde{m}^k}$ as initial guess.

\label{chapter_newton_step_on_Ik}
We first formulate the Newton step on $\mathcal{I}_k$ in the vector notation, similar to \cite[Definition 2]{WeiBR20}. To do so, we need the vectorization operator.
\begin{definition}[Vectorization restricted to $\mathbb{R}^{M \times m}$]
For a matrix
\begin{align*}
(v_1|...|v_m ) \in \mathbb{R}^{M \times m} \quad \text{with columns} \quad v_i \in \mathbb{R}^M \quad \text{for} \quad i \in \{1,...,m\}\text{,}
\end{align*}
the vectorization operator \cite[Section 5.1]{hackbusch_tensor1} is defined as
\begin{align*}
\operatorname{vec}: \mathbb{R}^{M \times m} \rightarrow \mathbb{R}^{Mm} \text{, } \operatorname{vec}(v_1 | ...| v_m )\mapsto \left( \begin{array}{c} v_1 \\ \vdots \\ v_m \end{array} \right) \in \mathbb{R}^{Mm}\text{.}
\end{align*}
The inverse vectorization operator is defined as
\begin{align*}
\operatorname{vec}^{-1}: \mathbb{R}^{Mm} \rightarrow \mathbb{R}^{M \times m}\text{,} \operatorname{vec}^{-1}\left(\begin{array}{c}v_1\\ \vdots\\v_m
\end{array} \right)\mapsto (v_1|...|v_m) \in \mathbb{R}^{M \times m}\text{.}
\end{align*}
\end{definition}

Once the initial guess for one Newton step is fixed to $x_{\epsilon_N}^{\tilde{m}^k}$ for $k \in \{1,...,K\}$, a Newton step for all problems related to the parameters in $\mathcal{I}_k$ can be formulated as follows: Find $s_k \in \mathbb{R}^{M|\mathcal{I}_k|}$ such that
\begin{align}\label{equation_newtonstep_vectornot1}
\underbrace{\diago \limits_{(\mu_s^{i_1},\nu_f^{i_2})\in \mathcal{I}_k} A(x_{\epsilon_N}^{\tilde{m}^k},\mu_s^{i_1},\nu_f^{i_2})}_{=:\mathcal{A}_k} s_k=b_D\otimes (1,...,1)^T-\Big(g(x_{\epsilon_N}^{\tilde{m}^k},\mu_s^{i_1},\nu_f^{i_2})\Big)_{(\mu_s^{i_1},\nu_f^{i_2})\in \mathcal{I}_k} \text{.}
\end{align}
$\mathcal{A}_k \in \mathbb{R}^{M|\mathcal{I}_k| \times M|\mathcal{I}_k|}$ is a block diagonal matrix. The unknown, the Newton update $s_k$, is a vector. Therefore, we will call the notation used in (\ref{equation_newtonstep_vectornot1}) the \emph{vector notation}. In this notation, the approximation for the next Newton step is
\begin{align*}
x^k:=\left( \begin{array}{c}x_{\epsilon_N}^{\tilde{m}^k}\\ \vdots \\ x_{\epsilon_N}^{\tilde{m}^k}
\end{array} \right)+s_k \in \mathbb{R}^{M|\mathcal{I}_k|} \text{.}
\end{align*}

We now translate (\ref{equation_newtonstep_vectornot1}) to a matrix equation.
Let
\begin{equation}\label{equation_rhs_matrix1}
\begin{aligned}
B_k^{\mu,\nu}(x_{\epsilon_N}^{\tilde{m}^k})&:=\big(b_D-g(x_{\epsilon_N}^{\tilde{m}^k},0,0)\big) \otimes (1,...,1)- \big( A_1 x_{\epsilon_N}^{\tilde{m}^k}+g_\mu(x_{\epsilon_N}^{\tilde{m}^k})\big) \otimes \diago(D_\mu^k)^T\\
&\hspace{.39cm}-\rho_fA_2x_{\epsilon_N}^{\tilde{m}^k}\otimes \diago(D_\nu^k)^T\text{,}
\end{aligned}
\end{equation}
where $g_\mu(x_h)$ evaluates the operator
\begin{align*}
\langle \nabla u^T \nabla u, \nabla \varphi \rangle_S
\end{align*}
at the given argument $u=u_h$. On the subset $\mathcal{I}_k$, for $k \in \{1,...,K\}$, the Newton step (\ref{equation_newtonstep_vectornot1}) is equivalent to the following matrix equation:
Find $S_k \in \mathbb{R}^{M\times |\mathcal{I}_k|}$ such that
\begin{equation}\label{equation_newton_matrix1}
\begin{aligned}
\underbrace{A_0S_k+A_1S_kD_{\mu-}^k+\rho_fA_2S_kD_{\nu-}^k+J_\mu (x_{\epsilon_N}^{\tilde{m}^k})S_k D_{\mu}^k+\lambda_sJ_\lambda(x_{\epsilon_N}^{\tilde{m}^k})S_k
+\rho_f J_\rho(x_{\epsilon_N}^{\tilde{m}^k})S_k}_{=:F(S_k, x_{\epsilon_N}^{\tilde{m}^k})}&=B_k^{\mu,\nu}(x_{\epsilon_N}^{\tilde{m}^k}) \text{.}
\end{aligned}
\end{equation}
The approximation for the next Newton step is given by
\begin{align}\label{equation_matrix_nextiterate1}
X^k:=x_{\epsilon_N}^{\tilde{m}^k}\otimes (1,...,1)+S_k \in \mathbb{R}^{M\times |\mathcal{I}_k|}\text{.}
\end{align}
\begin{remark}
Note that 
\begin{align*}
\operatorname{vec}(S_k)=s_k \qquad \text{as well as} \qquad \operatorname{vec}(X^k)=x^k\text{.}
\end{align*}
\end{remark}

The global approximation to the parameter-dependent problem is
\begin{align*}
\tilde{X}=[X^1|\cdots|X^K]\text{.}
\end{align*}
If the order of the parameters is chosen as defined in \cref{definition_little_endian_order1}, the column $p \in \mathbb{N}$ of $\tilde{X}$ is an approximation of the finite element solution to the FSI problem related to the parameters
\begin{align*}
(\mu_s^{\scalebox{.93}{(}(p-1)\hspace{-.18cm}\mod m_1\scalebox{.93}{)}+1}\text{, }\nu_f^{ \lceil \frac{p}{m_1} \rceil}) \text{.}
\end{align*}
We now explain how to approximate the matrix $S_k$ in (\ref{equation_newton_matrix1}) and why we do not apply more than one Newton step on $\mathcal{I}_k$.

\section{Low-rank methods and generalization}\label{chap_lrank_meth_and_gen1}
We first examine, from a theoretical point of view, the low-rank approximability of $S_k$ in (\ref{equation_newton_matrix1}).
Consider the parameter-dependent matrix $A(x_{\epsilon_N}^{\tilde{m}^k},\mu_s^{i_1},\nu_f^{i_2})$ and the right hand side
\begin{align*}
b(\mu_s^{i_1},\nu_f^{i_2}):=b_D-g(x_{\epsilon_N}^{\tilde{m}^k},\mu_s^{i_1},\nu_f^{i_2})
\end{align*}
of the equations involved in the block diagonal problem (\ref{equation_newtonstep_vectornot1}). Once $x_{\epsilon_N}^{\tilde{m}^k}$ is computed for $k \in \{1,...,K\}$, we consider $A(x_{\epsilon_N}^{\tilde{m}^k},\mu_s^{i_1},\nu_f^{i_2})$ as a matrix-valued function $A(\mu_s^{i_1},\nu_s^{i_2})$ that depends on the parameters $\mu_s^{i_1}$, $\nu_f^{i_2}$ only. 

\subsection{Approximability of \texorpdfstring{$\boldsymbol{S_k}$}{Sk} by a low-rank matrix}\label{section_approximability1}

Without loss of generality, we transform, for \cref{section_approximability1}, the parameter set $S^{\mu,\nu}$ such that
\begin{align*}
S^{\mu,\nu} \subset [-1,1] \times [-1,1]\text{.}
\end{align*}
Let $\mathcal{E}_{\rho_0}\subset \mathbb{C}$ be the open elliptic disc with foci $\pm 1$ and sum of half axes of $\rho_0>0$. If, in a one-parameter discretization, $A(\cdot)$ and $b(\cdot)$ are assumed to have analytic extensions on $\mathcal{E}_{\rho_0}$ and $A(\cdot)$ is assumed to be invertible on $\mathcal{E}_{\rho_0}$, the singular value decay of the matrix $S_k$ in (\ref{equation_newton_matrix1}) is exponential  as proven in \cite[Theorem 2.4]{KreT11}.
%
We define the open elliptic polydisc
\begin{align*}
  \mathcal{E}_{\rho_0}^\times:=\mathcal{E}_{\rho_0}\times \mathcal{E}_{\rho_0}\text{.}
\end{align*}

\begin{corollary}[Case $p=2$ of Theorem 3.6 in \cite{KreT11}]
  \label{corollary_theorem3_6}
  Assume that  
  \begin{align*}
    b: [-1,1]\times [-1,1] \rightarrow \mathbb{R}^M \qquad \text{and} \qquad A: [-1,1]\times [-1,1] \rightarrow \mathbb{R}^{M \times M}
  \end{align*}have analytic extensions on $\mathcal{E}_{\rho_0}^\times$ and $A(\mu, \nu) $ is invertible for all $(\mu,\nu)\in \mathcal{E}_{\rho_0}^\times$. There exists $\hat{S}_k\in \mathbb{R}^{M \times M}$ of rank $R \in \mathbb{N}$ for any $t=\frac{1}{q}-1$ with $0 < q \leq 1$ such that
  \begin{align*}
    \|S_k-\hat{S}_k\|_F \leq \sqrt{Mm_1m_2} C R^{-t}\text{,}
  \end{align*}
  with $C$ as defined in \cite[Theorem 3.6]{KreT11}.
\end{corollary}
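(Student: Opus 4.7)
The plan is to obtain the statement as a direct specialization of \cite[Theorem 3.6]{KreT11} to $p=2$, after identifying $S_k$ with a discrete sampling of a bivariate analytic function. Define the solution map $x(\mu,\nu):=A(\mu,\nu)^{-1}b(\mu,\nu)\in\mathbb{R}^M$. The $|\mathcal{I}_k|$ columns of $S_k$ are exactly the evaluations of $x(\cdot,\cdot)$ at the parameter pairs in $\mathcal{I}_k\subset[-1,1]^2$. Since $A$ and $b$ extend analytically to $\mathcal{E}_{\rho_0}^\times$ by assumption and $A(\mu,\nu)$ is invertible on this polydisc, the composition $x$ also extends analytically to $\mathcal{E}_{\rho_0}^\times$; this is the prerequisite that allows \cite[Theorem 3.6]{KreT11} to be invoked.

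Next, I would expand $x$ in the tensor-product Chebyshev basis,
\[
x(\mu,\nu)=\sum_{i,j\geq 0}c_{ij}\,T_i(\mu)T_j(\nu),\qquad c_{ij}\in\mathbb{R}^M.
\]
By Bernstein's lemma, applied in each variable on the elliptic disc $\mathcal{E}_{\rho_0}$, the coefficient norms $\|c_{ij}\|_2$ decay like $\rho_0^{-(i+j)}$ up to a constant proportional to the supremum of $\|x(\cdot,\cdot)\|_2$ on the distinguished boundary of $\mathcal{E}_{\rho_0}^\times$. Truncating the expansion to a finite set of $R$ index pairs and evaluating on the parameter grid produces $\hat{S}_k$ whose column span sits in the linear hull of the retained $c_{ij}$; hence $\hat{S}_k$ has rank at most $R$. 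The parameter $q\in(0,1]$ governs how aggressively the index set is chosen so as to balance tail contributions, and $t=\tfrac{1}{q}-1$ is the resulting algebraic rate extracted from the exponential Chebyshev decay, exactly as in \cite[Theorem 3.6]{KreT11}.

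The error bound follows by propagating the pointwise tail estimate $\|x(\mu,\nu)-\hat{x}(\mu,\nu)\|_2\leq CR^{-t}$, valid for every $(\mu,\nu)\in[-1,1]^2$, to the Frobenius norm of $S_k-\hat{S}_k$: squaring, summing over the $M\cdot m_1m_2$ entries that bound the number of entries in $S_k$, and taking a square root yields the prefactor $\sqrt{Mm_1m_2}$, with $C$ inherited verbatim from \cite[Theorem 3.6]{KreT11}. The main obstacle is essentially bookkeeping: matching the indexing of $S_k$, which enumerates an arbitrary subset $\mathcal{I}_k$ of the full tensor-product grid, with the tensor-valued statement of \cite[Theorem 3.6]{KreT11} at $p=2$, and verifying that our hypothesis of joint analyticity and invertibility on $\mathcal{E}_{\rho_0}^\times$ coincides with the hypothesis required there.
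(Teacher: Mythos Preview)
Your proposal is correct and takes essentially the same route as the paper: both reduce the statement to \cite[Theorem~3.6]{KreT11} with $p=2$ after verifying that the solution map $x(\mu,\nu)=A(\mu,\nu)^{-1}b(\mu,\nu)$ inherits analyticity on $\mathcal{E}_{\rho_0}^\times$. The paper's own proof is a two-line citation, so your sketch is considerably more explicit about the Chebyshev--Bernstein mechanism behind that theorem.

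One small difference is worth flagging. The paper additionally invokes ``the uniqueness of the CP rank'' \cite[Section~3.2]{kolda_review_tensor_decomp1}. The reason is that \cite[Theorem~3.6]{KreT11} is stated for order-$(p{+}1)$ tensors and produces a CP-rank-$R$ approximation in $\mathbb{R}^M\otimes\mathbb{R}^{m_1}\otimes\mathbb{R}^{m_2}$; to get a \emph{matrix} of rank $\le R$ one needs the elementary fact that any matricization of a CP-rank-$R$ tensor has matrix rank at most $R$. Your explicit truncation argument sidesteps this detour: the columns of your $\hat{S}_k$ lie in the span of the finitely many retained coefficient vectors $c_{ij}$, so the matrix rank bound is immediate without passing through tensor rank. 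Both arguments are valid; yours is a touch more self-contained for the matrix setting, while the paper's is terser and leans entirely on the cited results.
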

\begin{proof}
  \cref{corollary_theorem3_6} follows directly from \cite[Theorem
    3.6]{KreT11}.
\end{proof}

\begin{algorithm}[t]
  \caption{Two-parameter Discretization of Nonlinear FSI Problem} \label{algorithm_one1}
  \begin{algorithmic}
    \Require{Accuracy $\epsilon_N>0$, ranks $R_k \in \mathbb{N}$ for $k \in \{1,...,K\}$}
    \Ensure{$\hat{X}$, a rank-$\sum \limits_{k=1}^K R_k$ approximation of the parameter-dependent FSI discretization}
    \State Split the parameter set $S^{\mu, \nu}$ into the disjoint subsets $\bigcupdot \limits_{k=1}^K \mathcal{I}_k$
    \For{$k = 1,...,K$} 
    \State Compute the Newton approximation $x_{\epsilon_N}^{\tilde{m}^k}$ of the problem related to the upper median parameters\\
    \hspace{.55cm}in $\mathcal{I}_k$ with index $\tilde{m}^k$. Stopping criterion $\|residual\|_2 \leq \epsilon_N$
    \State Use $x_{\epsilon_N}^{\tilde{m}^k}$ as initial guess for one Newton step on the subset $\mathcal{I}_k$. By means of a low-rank method \\ \hspace{.55cm}such as GMREST or the ChebyshevT method from \cite{WeiBR20}, find a rank-$R_k$ approximation \begin{align*}
      \hat{X}^k \approx x_{\epsilon_N}^{\tilde{m}^k}\otimes (1,...,1)+S_k \in \mathbb{R}^{M \times |\mathcal{I}_k|}\text{,}
    \end{align*}
    \\ \hspace{.55cm}where $S_k$ is a solution to
    \begin{align}\label{equation_matrixeq_algo1}
      F(S_k, x_{\epsilon_N}^{\tilde{m}^k})=B_k^{\mu,\nu}(x_{\epsilon_N}^{\tilde{m}^k})\text{,}
    \end{align}
    \\ \hspace{.55cm} with the notation from (\ref{equation_newton_matrix1}).
    \EndFor
    \State The global approximation is then given by
    \begin{align*}
      \hat{X}:=[\hat{X}^1|\cdots|\hat{X}^K]\text{.}
\end{align*}
  \end{algorithmic}
\end{algorithm}

\begin{remark}
  Even though the term $R^{-t}$ suggests a polynomial decay of the error for an increasing rank $R$, one has to be careful about the constant $C$. The smaller $q$ is chosen, the bigger $C$ becomes. Choosing the right $q$ is difficult. However, we do not go into detail here and refer to \cite[Section 3.1]{KreT11} for further reading.
\end{remark}

The matrix $S_k$ in (\ref{equation_newton_matrix1}) can now be approximated by a low-rank method such as the GMREST or the ChebyshevT method from \cite[Algorithm 1 and 3]{WeiBR20}. Once $\hat{S}_k$, a low-rank approximation of $S_k$, is computed, a low-rank approximation of $X^k$ in (\ref{equation_matrix_nextiterate1}) can be achieved by a simple rank $1$ update.

\subsection{Preconditioner}\label{chapter_preconditioner1}

The mean-based preconditioner corresponding to the formulation (\ref{equation_newtonstep_vectornot1}) in vector notation is
\begin{align*}
\mathcal{P}_T^k&:=I^{|\mathcal{I}_k|\times |\mathcal{I}_k|}\otimes \underbrace{A(x_{\epsilon_N}^{\tilde{m}^k}\text{, }\bar{\mu}_s^k\text{, }\bar{\nu}_f^k)}_{=:P_T^k}\text{,}
\end{align*}
where
\begin{align*}
\bar{\mu}_s^k&:=\frac{\min \limits_{(\mu_s^{i_1},\nu_f^{i_2})\in \mathcal{I}_k} (\mu_s^{i_1}-\mu_s) + \max \limits_{(\mu_s^{i_1}, \nu_f^{i_2}) \in \mathcal{I}_k} (\mu_s^{i_1}-\mu_s)}{2}\qquad \text{and}\\
\bar{\nu}_f^k&:=\frac{\min  \limits_{(\mu_s^{i_1}, \nu_f^{i_2}) \in \mathcal{I}_k} (\nu_f^{i_2}-\nu_f)+\max \limits_{(\mu_s^{i_1}, \nu_f^{i_2}) \in \mathcal{I}_k} (\nu_f^{i_2}-\nu_f) }{2}
\qquad \text{for} \qquad k \in \{1,...,K\}\text{,}
\end{align*}
similar to $\mathcal{P}_T$ in \cite[Section 3.2]{WeiBR20}.
\begin{remark}[Multiple Newton Steps]We point out two difficulties that would come up if multiple Newton steps in the form (\ref{equation_newton_matrix1}) were performed. In a second Newton step, $\hat{X}^k$, a low-rank approximation of $X^k$ in (\ref{equation_matrix_nextiterate1}), serves as initial guess.
\par In the first Newton step, the right hand side matrix $B_k^{\mu,\nu}(x_{\epsilon_N}^{\tilde{m}^k})$ in (\ref{equation_newton_matrix1}) has rank not more than $3$. Even if the initial guess $\hat{X}^k$ for a second Newton step is a low-rank matrix, the columns of $\hat{X}^k$ differ from each other. Therefore, the residual of the problem, the right hand side, to be more precise, $g(\cdot,\cdot,\cdot)$ has to be evaluated column by column $|\mathcal{I}_k|$ times. This is too expensive and the low-rank structure of the right hand side matrix is not assured anymore in a second Newton step.
\par The first Newton step is indeed a Newton step. Due to the fact that all problems use the same initial guess $x_{\epsilon_N}^{\tilde{m}^k}$, the Jacobian matrix assembled is correct for all problems related to the subset $\mathcal{I}_k$. In a second Newton step, this would not be the case. Even for different initial guesses, one single Jacobian matrix would be used for all problems related to the respective subset $\mathcal{I}_k$.
\end{remark}
The resulting method, a low-rank method for a two-parameter discretization of (\ref{equation_fsi_problem_weak1}), is coded in \cref{algorithm_one1}.

\subsection{Time dependent fluid-structure interactions}\label{chapter_time1}

Similar to \cite[Section 5.1]{WeiBR20}, we consider the time-dependent nonlinear fluid-structure interaction problem that couples the Navier-Stokes with the St. Venant Kirchhoff model equations. 
Let $t \in [0,T] \subset \mathbb{R}$ be the time variable for $T \in \mathbb{R}$ and $\rho_s \in \mathbb{R}$ denote the solid density. The weak formulation of the non-stationary nonlinear fluid-structure interaction problem is
\begin{equation}
\label{equation_fsi_problem_time_weak1}
\begin{aligned}
\langle \nabla \cdot v, \xi  \rangle_F&=0\text{,}\\
\mu_s \langle \nabla u + \nabla u^T+\nabla u^T \nabla u, \nabla \varphi \rangle_S +\lambda_s \langle \operatorname{tr}(\nabla u+\frac{1}{2}\nabla u^T \nabla u)I, \nabla \varphi \rangle_S+\underbrace{\rho_s \langle \partial_t v, \varphi \rangle_S}_{(\star)}\\
+\underbrace{\rho_f \langle \partial_t v, \varphi \rangle_F}_{(\star \star)} +\rho_f \langle (v \cdot \nabla)v, \varphi  \rangle_F + \nu_f \rho_f \langle \nabla v+\nabla v^T, \nabla \varphi \rangle_F- \langle p, \nabla \cdot \varphi \rangle_F &=0 \quad \text{and}\\
\langle \nabla u, \nabla \psi  \rangle_F&=0\text{,}
\end{aligned}
\end{equation}
with $v \in L^2\big([0,T];v_{\text{in}}+H_0^1(\Omega,\Gamma_f^D \cup \Gamma_{\text{int}})^d\big)$, $\partial_t v \in L^2\big([0,T];H^{-1}(\Omega)^d\big)$ for all $(t,x) \in [0,T] \times \Omega$. We use the notation from (\ref{equation_fsi_problem_weak1}). For time discretization, we apply the $\theta$-scheme from \cite[Section 4.1]{Ric17}.
\subsubsection{Time discretization with the \texorpdfstring{$\boldsymbol{\theta}$}{Theta}-scheme}
Let the discretization matrices $A_t^f$, $A_t^s \in \mathbb{R}^{M \times M}$ be such that
\begin{align*}
A_t^f \quad \text{discretizes} \quad \langle v,\varphi \rangle_F \quad \text{and} \quad A_t^s \quad \text{discretizes} \quad \langle v, \varphi\rangle_S \text{.}
\end{align*}
We consider the discrete time interval $[0,T]$ that is split into $w+1 \in \mathbb{N}$ equidistant time steps. The start time is $t_0=0$, the time steps are given by $t_i:=i\Delta_t$ for $i \in \{0,...,w\}$, $\Delta_t=t_{i+1}-t_{i}$ for all $i \in \{0,...,w-1\}$ and $w\Delta_t=T$. Let $b_D^i \in \mathbb{R}^M$ be the right hand side at time $t_i$ and
\begin{align*}
B^i:=b_D^i \otimes (1,...,1) \quad \text{for} \quad i \in \{0,...,w\}\text{.}
\end{align*}
Since both time-dependent operators in (\ref{equation_fsi_problem_time_weak1}), $(\star)$ on the solid and $(\star \star)$ on the fluid, depend linearly on the unknown, a parameter-dependent discretization is straight forward.

We start to discuss the problem with fixed parameters $(\mu_s^{i_1},\nu_f^{i_2})\in S^{\mu,\nu}$. Let $x^{i_1,i_2,t_0}$ at time $t_0=0$ be the initial. We can compute $x^{i_1,i_2,t_0}$ as an approximation to the stationary problem or simply set $x^{i_1,i_2,t_0}=b_D^0$. Let $\theta \in (0,1]$. $x_{\epsilon_N}^{i_1,i_2,t_{i-1}} \in \mathbb{R}^M$ denotes the approximation of the previous time step.
At the Newton step $j \in \mathbb{N}$, find $s\in \mathbb{R}^M$ such that
\begin{multline}\label{equation_time_indiv1}
  \Big(\frac{1}{\Delta_t}(\rho_f A_t^f+\rho_s A_t^s)+ \theta A(x_{j-1}^{i_1,i_2,t_i},\mu_s^{i_1}, \nu_f^{i_2})\Big)\delta x_{j}^{i_1,i_2,t_i}=\theta b_D^{i}+(1-\theta)b_D^{i-1}\\
  +\frac{1}{\Delta_t}(\rho_f A_t^f+\rho_s A_t^s)(x_{\epsilon_N}^{i_1,i_2,t_{i-1}}-x_{j-1}^{i_1,i_2,t_i})
  -(1-\theta)g(x_{\epsilon_N}^{i_1,i_2,t_{i-1}},\mu_s^{i_1},\nu_f^{i_2})-\theta g(x_{j-1}^{i_1,i_2,t_i},\mu_s^{i_1},\nu_f^{i_2})\text{,}
\end{multline}
with the notation from (\ref{equation_newton_step_indiv1}). The approximation at the next Newton step is given by
\begin{align*}
  x_{j}^{i_1,i_2,t_i}:=x_{j-1}^{i_1,i_2,t_i}+\delta x_j^{i_1,i_2,t_i}.
\end{align*}
After $l \in \mathbb{N}$ Newton steps, the approximation of problem (\ref{equation_fsi_problem_time_weak1}) at time $t_i$, related to the parameter combination $(\mu_s^{i_1},\nu_f^{i_2})$, is given by $x_l^{i_1,i_2,t_i} \in \mathbb{R}^{M}$. (\ref{equation_time_indiv1}) can be, for a set of problems, translated to one single matrix equation similar to (\ref{equation_newton_matrix1}). Now, the Newton update at time step $t_i$, $S_k^{t_i} \in \mathbb{R}^{M \times |\mathcal{I}_k|}$, is time-dependent.

We consider the problems related to the parameters in the subset $\mathcal{I}_k$. Let $X^{k,t_{i}} \in \mathbb{R}^{M \times |\mathcal{I}_k|}$ be the approximation at time step $t_{i}$ and $x_{\epsilon_N}^{\tilde{m}^k,t_i} \in \mathbb{R}^M$ the Newton approximation of the upper median parameter problem for $i \in \{1,...,w\}$. $x_{\epsilon_N}^{\tilde{m}^k,t_i}$ is computed via (\ref{equation_time_indiv1}) for $(i_1,i_2)=\tilde{m}^k$. \par At time $t_0$, $X^{k,t_0} \in \mathbb{R}^{M \times |\mathcal{I}_k|}$ is given as initial value as well as $x_{\epsilon_N}^{\tilde{m}^k,t_0} \in \mathbb{R}^M$. On the subset $\mathcal{I}_k$ at time step $t_i$ for $i>0$, we apply the $\theta$-scheme for $\theta\in (0,1]$. In the Newton step on $\mathcal{I}_k$, we find $S_k^{t_i}\in \mathbb{R}^{M \times |\mathcal{I}_k|}$ such that
\begin{multline}\label{equation_timedep_matrix1}
  \frac{1}{\Delta_t}\big(\rho_f  A_t^f  + \rho_s A_t^s\big)S_k^{t_i}+\theta F(S_k^{t_i},x_{\epsilon_N}^{\tilde{m}^k,t_i})=\theta B^i+(1-\theta )B^{i-1}\\
  +\frac{1}{\Delta_t}(\rho_f A_t^f+\rho_s A_t^s)\big(X^{k,t_{i-1}}-x_{\epsilon_N}^{\tilde{m}^k,t_i}\otimes (1,...,1)\big)\\
  -\underbrace{(1-\theta)\operatorname{vec}^{-1}\Big( \big( g(X_{\mu_s^{i_1},\nu_f^{i_2}}^{k,t_{i-1}},\mu_s^{i_1},\nu_f^{i_2}  )\big)_{(\mu_s^{i_1},\nu_f^{i_2})\in \mathcal{I}_k}\Big)}_{(\star \star \star)}
  -\theta B_k^{\mu, \nu}(x_{\epsilon_N}^{\tilde{m}^k,t_i})\text{,}
\end{multline}
where $X_{\mu_s^{i_1},\nu_f^{i_2}}^{k.t_{i-1}}$ denotes the approximation of the last time step related to the parameters $(\mu_s^{i_1},\nu_f^{i_2})$.
\begin{remark}[Right Hand Side]Evaluation of the right hand side in  (\ref{equation_timedep_matrix1}), especially $(\star \star \star)$, is expensive. $(\star \star \star)$ could be approximated by
\begin{align*}
(1-\theta) B_k^{\mu, \nu}(x_{\epsilon_N}^{\tilde{m}^k ,t_{i-1}})\text{,}
\end{align*}
which has  at most rank $3$ and, therefore, is cheaper to evaluate.
\end{remark}
\begin{remark}[Implicit Treatment of the Pressure]As discussed in \cite[Section 4.1.4]{Ric17}, the pressure operator
\begin{align*}
-\langle p,\nabla \cdot \varphi \rangle_F
\end{align*}
is always treated implicit.
\end{remark}
Equation (\ref{equation_timedep_matrix1}) is, in matrix notation, a Newton step of the discretization of (\ref{equation_fsi_problem_time_weak1}) on $\mathcal{I}_k$ at time step $t_i$ with initial guess $x_{\epsilon_N}^{\tilde{m}^k,t_i}$. Hence, parameter-dependent discretizations of (\ref{equation_fsi_problem_time_weak1}) can be approached by low-rank methods that approximate the Newton update $S_k^{t_i}$ at time step $t_i$, the solution to (\ref{equation_timedep_matrix1}).

\subsection{Extension to further parameters}
\label{chap_extension_further1}

Consider a parameter-dependent discretization of problem (\ref{equation_fsi_problem_weak1}) with respect to $m_1$ shear moduli, $m_2$ kinematic fluid viscosities, $m_3$ first Lam\'{e} parameters and $m_4$ fluid densities. The parameter set is now given by
\[
  S^{\mu, \nu, \lambda,\rho}:=\{(\mu_s^{i_1},\nu_f^{i_2},\lambda_s^{i_3},\rho_f^{i_4})\}_{i_k\in \{1,\dots,m_k\},\; k=1,2,3,4}\subset \mathbb{R} \times \mathbb{R} \times \mathbb{R} \times \mathbb{R}.
\]
We fix the parameters $(\mu_s^{i_1},\nu_f^{i_2},\lambda_s^{i_3},\rho_f^{i_4}) \in S^{\mu,\nu,\lambda,\rho}$ and consider the Newton iteration for the problem related to this parameter choice. Let the discrete operator $A_3 \in \mathbb{R}^{M \times M}$ be such that
\[
A_3 \qquad \text{discretizes} \qquad \langle \operatorname{tr}(\nabla u)I,\nabla \varphi \rangle_S\text{.}
\]
With $x_0^{i_1,i_2,i_3,i_4}:=b_D$, the parameter-dependent Jacobian matrix of the Newton step $j \in \mathbb{N}$ is
\begin{align*}
A(x_{j-1}^{i_1,i_2,i_3,i_4},\mu_s^{i_1},\nu_f^{i_2},\lambda_s^{i_3},\rho_f^{i_4})&:=A_0+(\mu_s^{i_1}-\mu_s)A_1+(\nu_f^{i_2}\rho_f^{i_4}-\nu_f \rho_f)A_2+(\lambda_s^{i_3}-\lambda_s)A_3
\\
&\hspace{.3cm}+\mu_s^{i_1}J_\mu(x_{j-1}^{i_1,i_2,i_3,i_4})+\lambda_s^{i_3}J_\lambda(x_{j-1}^{i_1,i_2,i_3,i_4})+\rho_f^{i_4}J_\rho(x_{j-1}^{i_1,i_2,i_3,i_4})\text{.}
\end{align*}
Find the Newton update $\delta x_j^{i_1,i_2,i_3,i_4}\in X_h$ such that
\begin{align*}
A(x_{j-1}^{i_1,i_2,i_3,i_4},\mu_s^{i_1},\nu_f^{i_2},\lambda_s^{i_3},\rho_f^{i_4})\delta x_j^{i_1,i_2,i_3,i_4}=b_D-g(x_{j-1}^{i_1,i_2,i_3,i_4},\mu_s^{i_1},\nu_f^{i_2},\lambda_s^{i_3},\rho_f^{i_4}),
\end{align*}
where $g(x_{j-1}^{i_1,i_2,i_3,i_4},\mu_s^{i_1},\nu_f^{i_2},\lambda_s^{i_3},\rho_f^{i_4})$ denotes the residual of the problem (\ref{equation_fsi_problem_weak1}) evaluated at $x_{j-1}^{i_1,i_2,i_3,i_4}$ and parameters $(\mu_s^{i_1}, \nu_f^{i_2},\lambda_s^{i_3},\rho_f^{i_4})$. The approximation for the next linearization step is given by
\begin{align*}
x_j^{i_1,i_2,i_3,i_4}=x_{j-1}^{i_1,i_2,i_3,i_4}+\delta x_j^{i_1,i_2,i_3,i_4}.
\end{align*}

\subsubsection{Clustering}
Again, the grouping operation of the indexes of the elements in $S^{\mu,\nu,\lambda,\rho}$ is little endian. For
\begin{align*}
(i_1,i_2,i_3,i_4)\text{, }(l_1,l_2,l_3,l_4) \in \{1,...,m_1\}\times \{1,...,m_2\}\times \{1,...,m_3\}\times \{1,...,m_4\}\text{,}
\end{align*}
it holds
\begin{align*}
(\mu_s^{i_1},\nu_f^{i_2},\lambda_s^{i_3},\rho_f^{i_4}) &< (\mu_s^{l_1},\nu_f^{l_2},\lambda_s^{l_3},\rho_f^{l_4}) \\
&\Updownarrow\\
i_1+(i_2-1)m_1+(i_3-1)m_1m_2 
+(i_4-1)m_1m_2m_3 &<l_1+(l_2-1)m_1+(l_3-1)m_1m_2+(l_4-1)m_1m_2m_3 \text{.}
\end{align*}
Let the clustering of the parameter set 
\begin{align*}
S^{\mu,\nu,\lambda,\rho}=\bigcupdot \limits_{k=1}^K \mathcal{I}_k
\end{align*} be as defined in (\ref{equation_clustering1}). For the sake of readability, we define
\begin{align*}
\Xi^k :=\{(i_1,i_2,i_3,i_4) : (\mu_s^{i_1},\nu_f^{i_2},\lambda_s^{i_3},\rho_f^{i_4}) \in \mathcal{I}_k\}
\end{align*}and simply write
\begin{align*}
(i_2,i_4)\in \Xi^k \quad \text{for the set} \quad \{ (i_2,i_4) : (i_1,i_2,i_3,i_4) \in \Xi^k\}\text{.}
\end{align*}
\subsubsection{Diagonal matrices}
To formulate the Newton step on $\mathcal{I}_k$ as a matrix equation, we need the matrices
\begin{align*}
D_{\nu\rho}^k&:=\diago \limits_{(i_2,i_4) \in \Xi^k} (\nu_f^{i_2}\rho_f^{i_4})\text{,} \qquad  D_{\nu\rho-}^k:=D_{\nu\rho}^k-\nu_f\rho_fI^{|\mathcal{I}_k|\times |\mathcal{I}_k| }\text{,} \qquad
&&D_{\lambda}^k:=\diago \limits_{
i_3\in \Xi^k} (\lambda_s^{i_3}) \text{,} \\
D_{\lambda-}^k&:=D_\lambda^k-\lambda_sI^{|\mathcal{I}_k|\times|\mathcal{I}_k|}\text{,}\qquad \hspace{.355cm} D_\rho^k:=\diago \limits_{i_4 \in \Xi^k} (\rho_f^{i_4}) \qquad \qquad \text{and} \qquad &&\hspace*{-.13cm}D_{\rho-}^k:=D_\rho^k-\rho_fI^{|\mathcal{I}_k|\times|\mathcal{I}_k|}
\end{align*}
in addition to the diagonal matrices from \cref{remark_diag_matrices1}.
\subsubsection{The matrix equation for four parameters} Let $\tilde{m}^k=(\tilde{m}_1^k,\tilde{m}_2^k,\tilde{m}_3^k,\tilde{m}_4^k)\in \Xi^k$ be the index corresponding to the upper median parameter of the subset $\mathcal{I}_k$ and $x_{\epsilon_N}^{\tilde{m}^k} \in \mathbb{R}^M$ the Newton approximation of the related problem. In addition to the evaluation function $g_\mu(\cdot)$ from (\ref{equation_rhs_matrix1}), we define $g_\lambda(x_h)$ and $g_\rho(x_h)$ such that they evaluate the operators
\begin{align*}
\frac{1}{2}\langle \operatorname{tr}(\nabla u^T \nabla u)I,\nabla \varphi \rangle_S \qquad \text{and} \qquad \langle (v \cdot \nabla)v,\varphi \rangle_F
\end{align*}
at the given argument $u=u_h$ and $v=v_h$, respectively. The right hand side in the matrix equation is
\begin{align*}
B_k^{\mu,\nu,\lambda,\rho}(x_{\epsilon_N}^{\tilde{m}^k})&:= \big(b_D -g(x_{\epsilon_N}^{\tilde{m}^k},0,0,0,0)\big)\otimes (1,...,1)-\big(A_1 x_{\epsilon_N}^{\tilde{m}^k}+g_\mu(x_{\epsilon_N}^{\tilde{m}^k})\big)\otimes \diago(D_\mu^k)^T \\
&\hspace{.39cm} -A_2x_{\epsilon_N}^{\tilde{m}^k}\otimes \diago (D_{\nu \rho}^k)^T-\big(A_3 x_{\epsilon_N}^{\tilde{m}^k} + g_\lambda(x_{\epsilon_N}^{\tilde{m}^k}) \big)\otimes \diago(D_\lambda^k)^T -g_\rho(x_{\epsilon_N}^{\tilde{m}^k})\otimes \diago(D_\rho^k)^T \text{.}
\end{align*}
At the Newton step on the subset $\mathcal{I}_k$, we find $S_k\in \mathbb{R}^{M \times |\mathcal{I}_k|}$ such that
\begin{equation}\label{equation_matrix_fourpars1}
\begin{aligned}
A_0S_k+A_1S_kD_{\mu-}^k+A_2S_kD_{\nu\rho-}^k+A_3S_kD_{\lambda-}^k\\+J_\mu(x_{\epsilon_N}^{\tilde{m}^k})S_kD_{\mu}^k+J_\lambda(x_{\epsilon_N}^{\tilde{m}^k})S_kD_{\lambda}^k+J_\rho(x_{\epsilon_N}^{\tilde{m}^k})S_k D_{\rho}^k&=B_k^{\mu,\nu,\lambda,\rho}(x_{\epsilon_N}^{\tilde{m}^k}) \text{.}
\end{aligned}
\end{equation}
For a four-parameter discretization, the matrix equation (\ref{equation_matrixeq_algo1}) in \cref{algorithm_one1} is replaced by (\ref{equation_matrix_fourpars1}).


\section{Numerical study}\label{chap_numerical_example1}

\begin{figure}[t]
\centering
\includegraphics[width=0.9\textwidth]{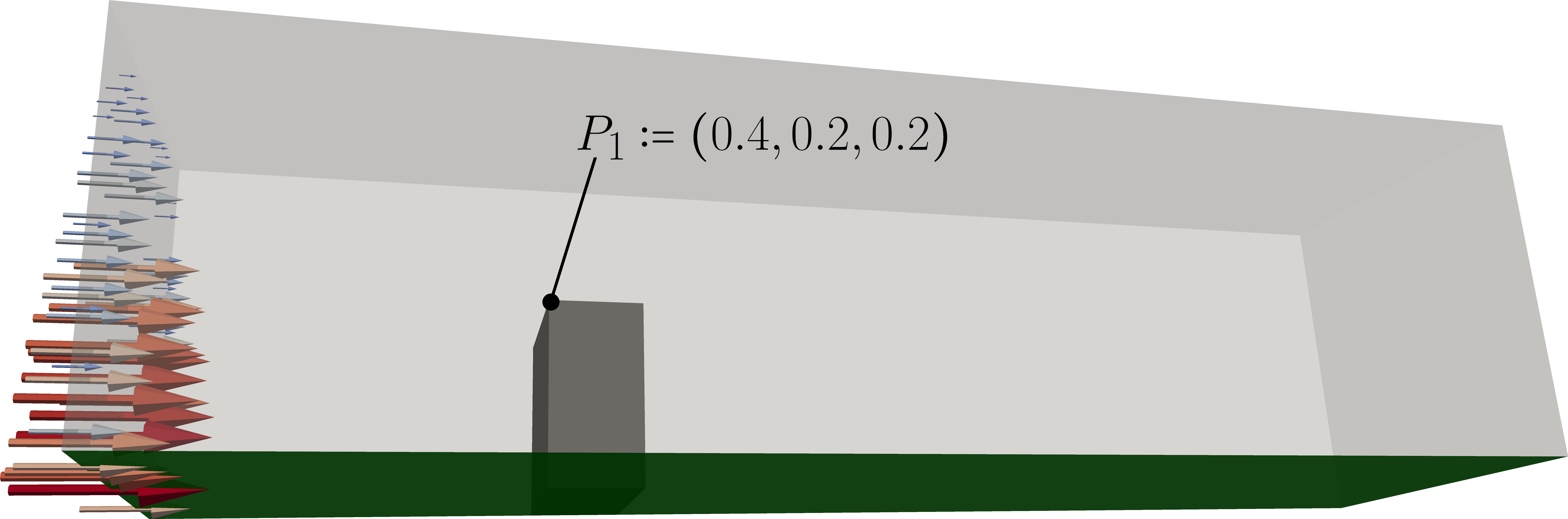}
\caption{Half of the domain of the initial configuration of the test case from \cite[Section 8.3.2.2]{Ric17}. The symmetry plane is in green. The evaluation point $P_1$ will be relevant in \cref{chapter_posteriori_error_estimate}.}
\label{figure_initial_geometry1}
\end{figure}

We consider the $3d$ test case from \cite[Section 8.3.2.2]{Ric17} with the domain $\Omega$, split into solid $S$ and fluid $F$
\begin{align*}
  \Omega :=(0,1.5) \times (0,0.4) \times (-0.4,0.4)\text{,} \qquad S:=(0.4,0.5)\times (0,0.2) \times (-0.2,0.2) \qquad \text{and}\qquad F:=\Omega \setminus \bar{S}\text{.}
\end{align*}
A parabolic inflow with an average velocity $v_{\text{in}}\approx 0.15$ is given at $x=0$. At $x=1.5$, the do-nothing condition $\nu_f\rho_f\nabla v n_F-pn_F=0$ holds for the velocity and the pressure. The problem is plane symmetric with respect to $z=0$. This is why we compute approximations in one half of the domain only (compare \cref{figure_initial_geometry1}). On the symmetry plane we demand
\[
v_f\cdot n=0 \qquad \text{and} \qquad u_s\cdot n =0 \qquad \text{for}\qquad z=0.
\]
On the right boundary at $x=1.5$ the do-nothing outflow condition holds and on all boundaries, velocity and deformation fulfill zero Dirichlet boundary conditions.

In this paper, we use 
tri-quadratic finite elements on a hexahedral mesh for the pressure, the velocity and the deformation. The Navier-Stokes equations are stabilized by local projection stabilization (LPS)~\cite{BeckerBraack2001} and \cite[Lemma 4.49]{Ric17}, by projecting the pressure onto the space of tri-linear finite elements on the same mesh, see \cite[Section 4.3.2]{Ric17}. Hereby the equal-order approach is applicable to pressure, deformation and velocity, which is relevant for formulating the matrix equation, compare~\eqref{equalorder}. Our finite element mesh $\Omega_h$ and discretization is such that the total number of unknowns is $M=66\,759$. 


The first  Lam\'e parameter $\lambda_s$ to is taken as $2\,000\,000$ and the fluid density $\rho_f$ as $\rho_f=1\,000$. The stationary fluid-structure interaction problem (\ref{equation_fsi_problem_weak1}) is discretized with  respect to the parameter set
\begin{align*}
  m_1&=500 \text{ shear moduli } \mu_s^{i_1} \in [\mbox{400\hspace*{.037cm}000},\mbox{600\hspace*{.037cm}000}] \text{ and} \\ m_2&=10 \text{ kinematic fluid viscosities } \nu_f^{i_2} \in [0.001,0.003]\text{.}
\end{align*}
With this choice we cover solids with Poisson ratios between $0.38$ and $0.41$ and Reynolds numbers between $\approx 20$ and $\approx 60$ if the characteristic length is assumed to be $L=0.4$.

All computations were performed with MATLAB\textsuperscript{\textregistered} 2018a in combination with the finite element toolkit \emph{Gascoigne 3d} \cite{software_gascoigne1} on a CentOS Linux release 7.5.1804 with a dual-socket Intel Xeon Silver 4110 and 192
GB RAM. \cite[Algorithm 6]{software_htucker1} was used to realize the truncation operator $\mathcal{T}(\cdot)$ from \cite[Definition 4]{WeiBR20} that maps, for $R \in \mathbb{N}$, into $T_R$, the space of Tucker tensors from \cite[Definition 3]{WeiBR20}. Using the MATLAB builtin command \texttt{lu()}, the preconditioners were decomposed into a permuted LU decomposition.

\label{chapter_numerical_comparison1}

\subsection{Low-rank approximation versus separate Newton iterations}\label{section_alg1_vs_sepnewton1}

\begin{figure}[t]
\centering
\subfloat[Results based on the ChebyshevT method]
         {\label{figure_alg1_chebyshevt1}
           \includegraphics[width=0.48\textwidth]{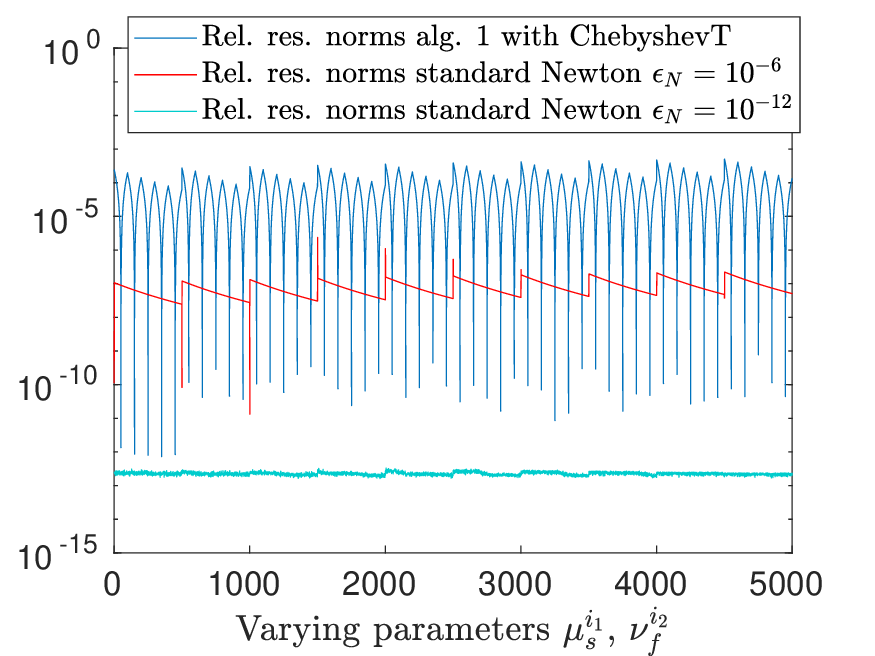}}
         \subfloat[Results based on the GMREST method]
         {\label{figure_alg1_gmrest1}\includegraphics[width=0.48\textwidth]{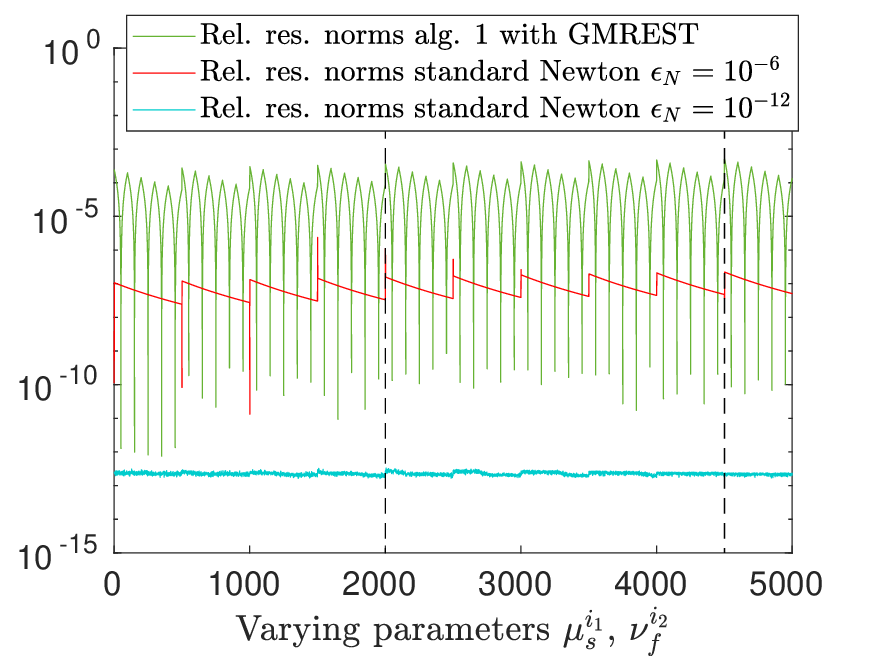}}

\caption{The norms of the relative residuals of the approximations computed by \cref{algorithm_one1} compared with the ones of the approximations computed by the standard Newton methods.}
\label{figure_alg1_compared_both}
\end{figure}

As a first test we compare the performance of \cref{algorithm_one1} with the individual solution of all problems by a Newton iteration.
In \cref{algorithm_one1} we set  $\epsilon_N=10^{-6}$, $K=50$ and $R_k=10$ for all $k \in \{1,...,K\}$. The indices of the parameter set $S^{\mu, \nu}$ were ordered little endian. The ChebyshevT and the GMREST methods both were restarted once after $6$ iterations. To compute $x_{\epsilon_N}^{\tilde{m}^k}$, for $k>1$, $x_{\epsilon_N}^{\tilde{m}^{k-1}}$ served as initial guess for the Newton iteration such that mostly a single Newton step is sufficient.

\subsubsection{ChebyshevT parameter estimation}To estimate the parameters $c$ and $d$ for the ChebyshevT method (compare \cite[Section 4.3]{WeiBR20}), a downgraded variant of \cref{algorithm_one1} was ran for a small number of $M_c=$ \mbox{$1$\hspace{.037cm}$575$} degrees of freedom. Let $\bar{\mathcal{I}}_k \subset \mathcal{I}_k$ such that
\begin{align*}
\bar{\mathcal{I}}_k=\{(\mu_s^{l_1},\nu_f^{l_2}) \in \mathcal{I}_k \text{ : } \big(\mu_s^{l_1} \geq \mu_s^{i_1} \vee \mu_s^{l_1} \leq \mu_s^{i_1}\big) \wedge \big( \nu_f^{l_2} \geq \nu_f^{i_2} \vee \nu_f^{l_2} \leq \nu_f^{i_2} \big) \quad \text{for all} \quad (\mu_s^{i_1},\nu_f^{i_2}) \in \mathcal{I}_k\}
\end{align*}
and define
\begin{align*}
Bl^k(\mu_s^{i_1},\nu_f^{i_2}):=(P_T^k)^{-1}A(x_{\epsilon_N}^{\tilde{m}^k}\mu_s^{i_1},\nu_f^{i_2})\text{,}
\end{align*}
with the preconditioner $P_T^k$ from \cref{chapter_preconditioner1}.
The quantities
\begin{align*}
\Lambda_{\text{max}}^k:=\max \{|\lambda| : \lambda \in \Lambda\big((\mathcal{P}_T^{k})^{-1}\mathcal{A}_k \big) \} \qquad \text{and} \qquad \Lambda_{\text{min}}^k :=\min \{ |\lambda|: \lambda \in \Lambda \big( (\mathcal{P}_T^{k})^{-1} \mathcal{A}_k 
\big) \}
\end{align*}
were approximated by
\begin{align*}
\bar{\Lambda}_\text{max}^k:=\max \limits_{(\mu_s^{i_1},\nu_f^{i_2})\in \bar{\mathcal{I}}_k} \{ |\lambda| : \lambda \in  \Lambda\big(Bl(\mu_s^{i_1},\nu_f^{i_2})\big) \}\qquad \text{and}\qquad \bar{\Lambda}_{\text{min}}^k:=\min \limits_{(\mu_s^{i_1},\nu_f^{i_2})\in \bar{\mathcal{I}}_k} \{ |\lambda| : \lambda \in \Lambda \big(Bl(\mu_s^{i_1},\nu_f^{i_2})\big)\}
\text{.}
\end{align*}
The Newton approximations $x_{\epsilon_N}^{\tilde{m}^k}$ in the for-loop of \cref{algorithm_one1} were computed to approximate the values $\bar{\Lambda}_{\text{max}}^k$ and $\bar{\Lambda}_{\text{min}}^k$ for all the subsets. Based on the smallest interval $[\bar{\Lambda}_{\text{min}}, \bar{\Lambda}_{\text{max}}] \subset \mathbb{R}$ such that
\begin{align*}
[\bar{\Lambda}_{\text{min}}^k, \bar{\Lambda}_{\text{max}}^k]\subset [\bar{\Lambda}_{\text{min}}, \bar{\Lambda}_{\text{max}}] \qquad \text{for all} \qquad k \in \{1,...,K\}\text{,}
\end{align*} the Chebyshev parameters $c$ and $d$ were chosen according to \cite[Section 4.3]{WeiBR20}. Estimating them to $c=0.1$ and $d=1$ took $19.8$ minutes.

\begin{table}[t]
  \centering
  \normalsize
  \begin{tabular}{|c|c|c|c|c|c|c|} \hline
    \bf Method & \bf Approx. & \bf Newton &  \multicolumn{4}{c|}{\textbf{Computation Times} (Minutes)}  \\ 
    &\bf Storage& \bf Steps &Est.&Indiv.&Comp.&\bf Total
    \\
    \hline
    \textbf{Alg. 1}, $R=500$ with& $O[(M+m+R)R]$ & 50+106& -& 815.5& 426.4&1241.9 \\
    GMREST (\textcolor{green_plot1}{green}) & $\approx 275.65$MB & =156& &  & & $\approx$ \textbf{21 hours}
    \\
    \hline
    \textbf{Alg. 1}, $R=500$ with& $O[(M+m+R)R]$ & 50+106&19.8 & 815.4&426 &1261.2 \\
    ChebyshevT (\textcolor{blue_plot1}{blue}) & $\approx 275.65$MB &=156  & &  & &$\approx$ \textbf{21 hours}\\
    \hline
    \textbf{Standard Newton}, $5000$& $O[Nm]$ &5016 &- &- &- &38757 \\
    times, $\epsilon_N=10^{-6}$ (\textcolor{red_plot1}{red}) & $\approx 2546.65$MB &  & &  & &$\approx$ \textbf{27 days}\\
    \hline
    \textbf{Standard Newton}, $5000$& $O[Nm]$ &10421 &- &- &- &80108 \\
    times, $\epsilon_N=10^{-12}$ (\textcolor{cyan_plot1}{cyan}) & $\approx 2546.65$MB &  & &  & &$\approx$ \textbf{56 days}\\
    \hline
  \end{tabular}%
  \caption*{\footnotesize$50$ Newton steps were required by Algorithm 1 on the subsets $\mathcal{I}_k$, $106$ to compute the approximations $x_{\epsilon_N}^{\tilde{m}^k}$. The column "Est." shows the time needed to estimate the ChebyshevT parameters $c$ and $d$, "Indiv." the time needed to compute the Newton approximations $x_{\epsilon_N}^{\tilde{m}_k}$ and "Comp." the time needed to approximate $S_k$ in the matrix equations and assemble $\hat{X}$.}
  \caption{\cref{algorithm_one1} Compared With the Standard Newton Method}
  \label{table_alg1_compared_stnewton1}
\end{table}

\subsubsection{The standard Newton method as a reference}
In comparison, the Newton iteration was applied to the $m=$ \mbox{$5$\hspace{.037cm}$000$} separate problems consecutively. The accuracy for the stopping criterion $\|residual\|_2 \leq \epsilon_N$ was once set to $\epsilon_N=10^{-6}$ and once to $\epsilon_N=10^{-12}$. For all but the first Newton iteration, the previous approximation served as initial guess. We call this the standard Newton method in this paper.

\par $b_D$ was used as initial guess to compute the Newton approximation $x_{\epsilon_N}^{\tilde{m}_1}$ in \cref{algorithm_one1} as well as for the first Newton iteration in the standard Newton method. The relative residual norms in \cref{figure_alg1_compared_both} are 
\begin{align*}
\frac{\|g(x^{i_1,i_2},\mu_s^{i_1},\nu_f^{i_2}) \|_2}{\|g(b_D,\mu_s^{i_1},\nu_f^{i_2})  \|_2} \text{,}
\end{align*}where $x^{i_1,i_2}$ is the approximation considered.
\par The relative residual norms of the approximations provided by the GMREST and the ChebyshevT method are all smaller than $10^{-3}$. The standard Newton method, in comparison, took about 27 days to compute approximations that provide relative residual norms that are all smaller than $10^{-6}$. The difference in the runtime of ChebyshevT and GMREST comes from the estimation of the parameters $c$ and $d$ for the ChebyshevT method. This poses the question whether the parameters for the ChebyshevT method, especially $c$, based on an initial guess, can be chosen adaptively depending on the convergence of the method.

\subsection{Error analysis}\label{chapter_posteriori_error_estimate}

We next explore the influence of the algebraic error arising from the approximation of the discrete equations. For this purpose, we consider two cases in detail. The relative residual norm of the approximation of the parameter combination $m=m_1m_2=$ \mbox{$4$\hspace{.037cm}$501$} (right dashed vertical line in \cref{figure_alg1_gmrest1}) is $\approx 5.1\cdot 10^{-4}$. Therefore, we would like to examine the quality of the approximation of the problem related to the parameter pair $(\mu_s^9,\nu_f^1)$ at $m=$ \mbox{$4$\hspace{.037cm}$501$} a bit more. In addition, we also consider the approximation of the problem related to $m=$ \mbox{$2$\hspace{.037cm}$001$}, the parameters $(\mu_s^4, \nu_f^1)$ (left dashed vertical line in \cref{figure_alg1_gmrest1}). There, the relative residual norm is $\approx 3.6\cdot 10^{-4}$.
For these two parameter combinations, each of which has a large error, we try to split the error into the discretization error and the error by applying the low-rank method.

To measure the quality of the approximation we consider the two error functionals that have been proposed in \cite{Richter2012a} and \cite[Section 8.3.2.2]{Ric17}. These are the $x$-deflection in the point $P_1=(0.4, 0.2, 0.2)$ from \cref{figure_initial_geometry1} and the force of the fluid onto the structure in main flow direction
\[
J_{x \text{-defl}}(x_h):=e_1\cdot u_h(P_1),\quad
J_{\text{drag}}(x_h):=\int \limits_{\Gamma_{\text{int}}} \sigma_f n_F \cdot e_1 ds.
\]
Here, $e_1 \in \mathbb{R}^3$ denotes the first unit vector and $u_h(P_1) \in \mathbb{R}^3$ is the deformation in the point $P_1$.

Let $(i_1,i_2) \in \Xi^{m_1, m_2}$. $\hat{x}^{i_1,i_2} \in \mathbb{R}^M$ denotes the approximation of the FSI problem related to the parameter combination $(\mu_s^{i_1},\nu_f^{i_2})$ provided by \cref{algorithm_one1} with the GMREST method. The respective approximation provided by the standard Newton method with $\epsilon_N=10^{-6}$ and $\epsilon_N=10^{-12}$ is denoted by $x^{{i_1},{i_2}}_{10^{-6}} \in \mathbb{R}^M$ and $x^{{i_1},{i_2}}_{10^{-12}} \in \mathbb{R}^M$, respectively. To compute a reference solution, we go to a finer grid, where the number of degrees of freedom is $M_f=$ \mbox{$495$\hspace{.037cm}$495$}. The Newton stopping criterion is set to $\|residual\|_2\leq 10^{-12}$ and we denote the approximation obtained by $x_{\frac{h}{2}}^{{i_1},{i_2}} \in \mathbb{R}^{M_f}$.

\begin{table}[t]
  \centering
  \normalsize
  \begin{tabular}{|c|c|c|c|c|c|} \hline
    \multicolumn{2}{|c|}{\textbf{Parameters}} & $\boldsymbol{\operatorname{err}_{x\textbf{-defl}}^{P_1} }$ &  $  \boldsymbol{ \operatorname{errdisc}_{x\textbf{-defl}}^{P_1}  } $ &  $\boldsymbol{\operatorname{err}_{\textbf{drag}} }$ & $ \boldsymbol{\operatorname{errdisc}_{\textbf{drag}} } $
    \\
    $\boldsymbol{(\mu_s^{i_1},\nu_f^{i_2})}$&$\boldsymbol{\epsilon_N}$ &&&&
    \\
    \hline
    \multirow{2}{*}{$(\mu_s^4,\nu_f^1)$}&$10^{-6}$& $2.626\cdot 10^{-7}$ & $4.566\cdot 10^{-3}$ &$1.043\cdot 10^{-7}$&$1.375\cdot 10^{-2}$	
    \\
    \cline{2-6}
    &$10^{-12}$&$2.655\cdot 10^{-7}$&$4.566\cdot 10^{-3}$&$1.046\cdot 10^{-7}$&$1.375\cdot 10^{-2}$	\\
    \hline
    \multirow{2}{*}{$(\mu_s^9,\nu_f^1)$}&$10^{-6}$&$4.415\cdot 10^{-7}$&$1.048\cdot 10^{-2}$&$1.200\cdot 10^{-7}$&$1.129\cdot 10^{-2}$
    \\
    \cline{2-6}
    &$10^{-12}$&$4.432\cdot 10^{-7}$&$1.048\cdot 10^{-2}$&$1.177\cdot 10^{-7}$&$1.129\cdot 10^{-2}$	\\
    \hline
  \end{tabular}%
  \caption{Error Analysis for \cref{algorithm_one1}}
\caption*{\footnotesize The discretization errors computed are smaller than the errors of \cref{algorithm_one1}.}
\label{table_posteriori1}
\end{table}

We list the relative quantities
\begin{align*}
  \operatorname{err}_{x\text{-defl}}^{P_1}&:= \frac{|J_{x\text{-defl}}(\hat{x}^{i_1,i_2})-J_{x\text{-defl}}(x_{\epsilon_N}^{i_1,i_2})|}{ |J_{x\text{-defl}}(x_{\frac{h}{2}}^{i_1,i_2})|} \text{,} \qquad \hspace{.732cm} \operatorname{errdisc}_{x\text{-defl}}^{P_1}:=\frac{|J_{x\text{-defl}}(x_{\frac{h}{2}}^{i_1,i_2})-J_{x\text{-defl}}(x_{\epsilon_N}^{i_1,i_2})|}{|J_{x\text{-defl}}(x_{\frac{h}{2}}^{i_1,i_2})|} \text{,}\\
  \operatorname{err}_{\text{drag}}&:= \frac{|J_{\text{drag}}(\hat{x}^{i_1,i_2})-J_{\text{drag}}(x_{\epsilon_N}^{i_1,i_2})|}{| J_{\text{drag}}(x_{\frac{h}{2}}^{i_1,i_2})|}  \qquad \text{and} \qquad
  \operatorname{errdisc}_{\text{drag}}:=\frac{|J_{\text{drag}}(x_{\frac{h}{2}}^{i_1,i_2})-J_{\text{drag}}(x_{\epsilon_N}^{i_1,i_2})|}{| J_{\text{drag}}(x_{\frac{h}{2}}^{i_1,i_2})|}\text{,}
\end{align*}
where $(i_1,i_2) \in \Xi^{m_1,m_2}$ and $\epsilon_N \in \{10^{-6}, 10^{-12}\}$ in \cref{table_posteriori1}. We assume that $x_{h/2}^{i_1,i_2}$ approximates the solution sufficiently well. Therefore, the quantities $\operatorname{errdisc}_{x\text{-defl}}^{P_1}$ and $\operatorname{errdisc}_{\text{drag}}$ can be considered as approximate values of the discretization errors. We estimate the relative error
\begin{align*}
  \frac{|J_{\text{drag}}^{i_1,i_2}-J_{\text{drag}}(\hat{x}^{i_1,i_2})|}{J_{\text{drag}}(x_{\frac{h}{2}}^{i_1,i_2}) } &\approx \frac{|J_{\text{drag}}(x_{\frac{h}{2}}^{i_1,i_2})-J_{\text{drag}}(\hat{x}^{i_1,i_2})|}{J_{\text{drag}}(x_{\frac{h}{2}}^{i_1,i_2}) }\\ &\leq \operatorname{errdisc}_{\text{drag}}+\operatorname{err}_{\text{drag}}\text{,}
\end{align*}
where $J_{\text{drag}}^{i_1,i_2}$ denotes the force of the fluid on the structure in $x$-direction of the solution. Furthermore, the error $\operatorname{err}_{\text{drag}}$ is the error of \cref{algorithm_one1}.
\par As listed in \cref{table_posteriori1}, we see that the discretization errors, $\operatorname{errdisc}_{x\text{-defl}}^{P_1}$ and $\operatorname{errdisc}_{\text{drag}}$, are always bigger than $\operatorname{err}_{x\text{-defl}}^{P_1}$ and $\operatorname{err}_{\text{drag}}$, the errors made by the GMREST method. In this sense, the GMREST method provided  approximations with accuracies that are at least of order $10^{4}$ more accurate than required.

\subsection{Fixed-point iteration and alternative clustering}\label{section_fixpt_and_alt_cluster}

\begin{figure}[t]
\centering
\subfloat[All relative residual norms in comparison]{\label{figure_picard1}\includegraphics[width=0.48\textwidth]{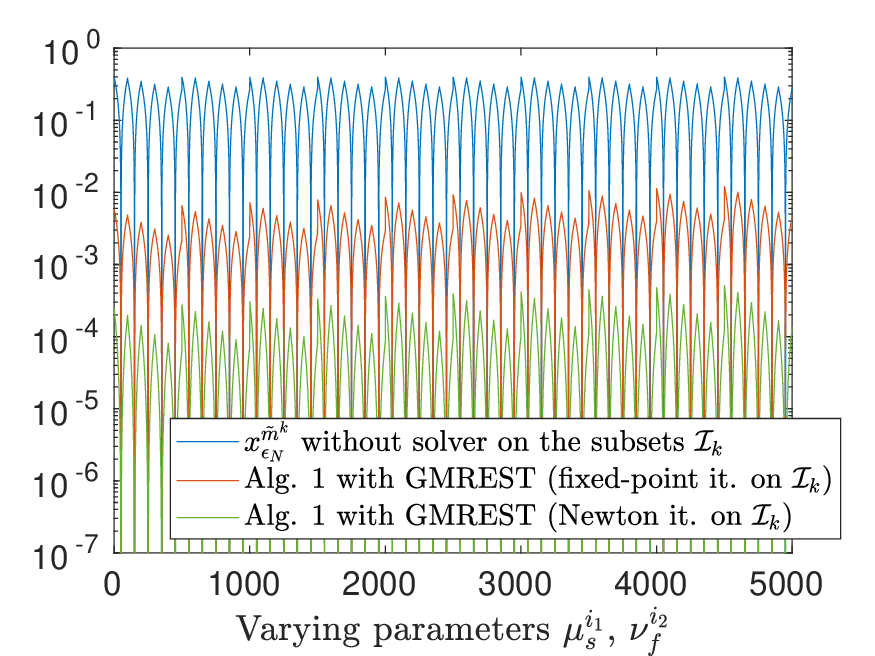}}
\subfloat[Relative residual norms of the problems related to parameter combinations $m=$ \mbox{$1$\hspace{.037cm}$500$} to $m=$ \mbox{$2$\hspace{.037cm}$500$}]{\label{figure_picard2}\includegraphics[width=0.48\textwidth]{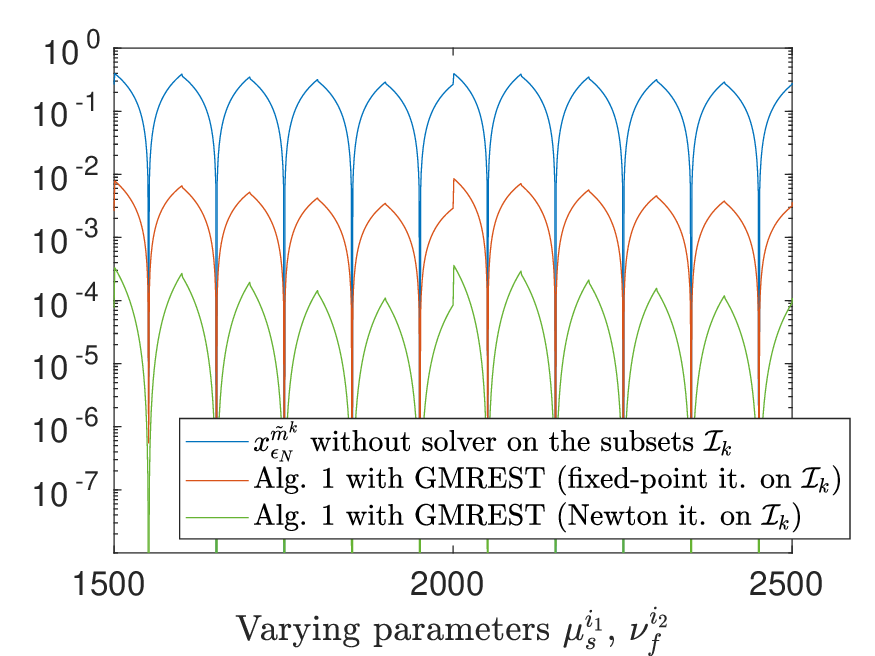}}
\caption{Newton versus fixed-point iteration on the subset level}
  \label{figure_picard_both}
\end{figure}

\cref{algorithm_one1} is based on the Newton iteration. We now investigate why this Newton approach is more advantageous  than basing \cref{algorithm_one1} on the Picard (fixed-point) iteration in the first place.

\label{chap_picard_iteration1}

An advantage of the fixed-point over the Newton iteration is that the right hand side does not need to be evaluated newly at every iteration. As in \cref{chapter_jacobianmatrices1}, we define the matrices $F_\rho(\cdot)$, $F_\mu(\cdot)$ and $F_\lambda(\cdot) \in \mathbb{R}^{M \times M}$ such that
\begin{align*}
F_\rho(x_h) \qquad &\text{discretizes} \qquad \langle  (v_h \cdot \nabla)v,\varphi \rangle_F \text{,} \\
F_\mu(x_h) \qquad &\text{discretizes} \qquad \langle \nabla u_h^T \nabla u, \nabla \varphi \rangle_S \qquad  \text{and}\\
F_\lambda(x_h) \qquad &\text{discretizes} \qquad \frac{1}{2}\langle \operatorname{tr}(\nabla u_h^T \nabla u)I, \nabla \varphi \rangle_S\text{.}
\end{align*}
$v_h$ is the velocity of the unknown $x_h$, the approximation of the previous linearization step. Since we solve the actual equation for $v$, replacing
\begin{align*}
(v \cdot \nabla )v \qquad \text{by}  \qquad ( v_h \cdot \nabla )v
\end{align*}
coincides with the Oseen fixed point linearization of the convection term mentioned in \cite[Section 4.4.1]{Ric17}. With the notation from  (\ref{equation_newton_matrix1}), a fixed-point step on the subset $\mathcal{I}_k$ translates to the problem of finding $X^k \in \mathbb{R}^{M \times |\mathcal{I}_k|}$ such that
\begin{equation}\label{equation_picard_matrix1}
\begin{aligned}
A_0X^k+A_1X^kD_{\mu-}^k+\rho_fA_2X^kD_{\nu-}^k+F_\mu(x_{\epsilon_N}^{\tilde{m}^k})X^kD_\mu^k\\+\lambda_s F_\lambda(x_{\epsilon_N}^{\tilde{m}^k})X^k+\rho_f F_\rho(x_{\epsilon_N}^{\tilde{m}^k})X^k&=b_D\otimes (1,...,1)\text{.}
\end{aligned}
\end{equation}
In \cref{algorithm_one1}, we replace (\ref{equation_matrixeq_algo1}) by (\ref{equation_picard_matrix1}). Once $x_{\epsilon_N}^{\tilde{m}^k}$ is computed on the respective subset, we directly approximate $X^k$  in (\ref{equation_matrixeq_algo1}) with a low-rank method. No update is needed here.

The variant of \cref{algorithm_one1} that uses fixed-point iteration on the subset level $\big($Alg. 1 with GMREST (fixed-point it. on $\mathcal{I}_k$)$\big)$ is compared with \cref{algorithm_one1} in \cref{figure_picard_both}. The relative residual norms (in \textcolor{green_plot1}{green}) denoted by Alg. 1 with GMREST (Newton it. on $\mathcal{I}_k$) in \cref{figure_picard_both} coincide with the ones in \cref{figure_alg1_gmrest1}. For \cref{algorithm_one1} based on fixed-point iteration (in \textcolor{red_plot1}{red}), we used not only the identical setup as the one in \cref{chapter_numerical_comparison1} but also exactly the same initial guesses $x_{\epsilon}^{\tilde{m}^k}$ for $k \in \{1,...,K\}$. In addition, we plot the relative residual norms of $x_{\epsilon_N}^{\tilde{m}^k}$ on the respective subsets $\mathcal{I}_k$ (in \textcolor{blue_plot1}{blue}) for visualization. 
\begin{remark}
At the parameter combination corresponding to $m=$ \mbox{$2$\hspace{.037cm}$000$}, another initial guess is used than at the parameter combination corresponding to $m=$ \mbox{$2$\hspace{.037cm}$001$}. This is why there is a kink at $m=$ \mbox{$2$\hspace{.037cm}$000$}.
\end{remark}
\par A Newton iteration on the subset level provides faster convergence and therefore, more accurate approximations than a fixed-point iteration on the subset level (compare \cref{figure_picard1}). This is illustrated in \cref{figure_picard2}, where the approximations computed by the algorithm based on the Newton iteration has a relative residual norm that is at least $10$ times smaller than the ones computed by the algorithm based on the fixed-point iteration.
 \subsection{Alternative ways to cluster the parameter set}\label{chap_alternative_clustering1}
 \begin{figure}[tbhp]
	\centering
  \includegraphics[scale=0.6]{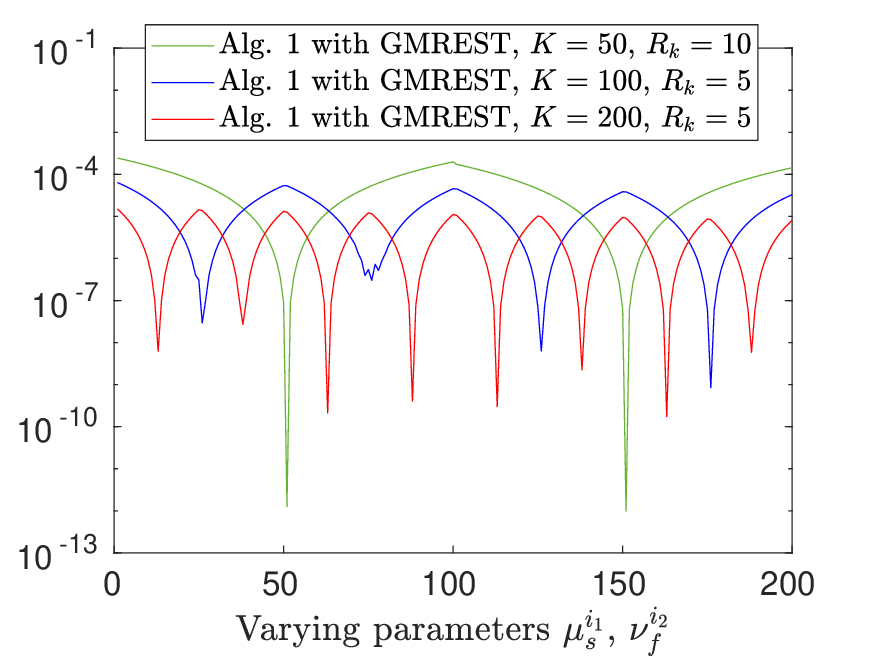}
	\caption{The relative residual norms of the approximations obtained by \cref{algorithm_one1} with different numbers of subsets.}
	\label{figure_adapt1}
\end{figure}
In \cite{WeiBR19a}, a one-parameter discretization of a nonlinear FSI problem was performed. The solid equations used there are linear and the residual norms in the respective numerical example obtained by \cite[Algorithm 1]{WeiBR19a} are all smaller than $10^{-9}$. This is remarkably smaller than the ones obtained in \cref{chap_numerical_example1}. The reason for this is, on the one hand, that the kinematic fluid viscosity in the numerical example in \cite{WeiBR19a} is, $\nu_f=0.04$, chosen much bigger than the kinematic fluid viscosities considered in this paper. On the other hand, the equations on the solid of the problem considered in this paper are nonlinear. The problems that are grouped by a subset $\mathcal{I}_k$ differ such that a Newton step in the form (\ref{equation_newton_matrix1}) converges for problems that do not lie central in $\mathcal{I}_k$, in the worst case, to a relative residual norm not smaller than $\approx 5.1 \cdot 10^{-4}$. The question whether choosing the subsets adaptively would bring any benefits arises in this context.
 
In \cref{figure_adapt1}, we compare the relative residual norms of the approximations obtained by \cref{algorithm_one1} with GMREST for different numbers of subsets. The \textcolor{green_plot1}{green} line corresponds to the ones of the first $200$ parameter combinations in \cref{figure_alg1_gmrest1}. For $K=100$ (corresponding line in \textcolor{blue_plot1}{blue}), the maximal relative residual norm of the approximations to the problems related to the first $200$ parameter combinations could be reduced from $\approx 2.44\cdot 10^{-4}$ to $\approx 6.34\cdot 10^{-5}$. A further refinement to $K=200$ (line in \textcolor{red_plot1}{red}) resulted in a maximal relative residual norm of $\approx 1.51\cdot 10^{-5}$. Reducing the rank $R_k$ to $5$ (in case of $K=100$ and $K=200$) did not lead to an increasing maximum in terms of relative residual norms.
\par However, in multi-parameter discretizations like the one considered in \cref{chap_numerical_example1}, smaller subsets could help to get higher accuracies for some problems. But it is not clear how to group the problems if fluid and solid parameters are varied at the same time and the cardinalities of the subsets $\mathcal{I}_k$ are not similar for all $k \in \{1,...,K\}$. It is still open how the clusters \cref{algorithm_one1} uses can be chosen more sophisticatedly or adaptively refined in a multi-parameter setup that requires more accurate approximations.

\section{Conclusions}
\cref{algorithm_one1} allows to compute low-rank approximations for parameter-dependent nonlinear FSI problems with lower complexity than standard approaches. We reduce the complexity of the resulting algorithm by applying low-rank approaches. Even though we only perform one Newton step on the subset level, the errors of the approximations we obtain with \cref{algorithm_one1}, if compared with the ones provided by full approaches on the respective mesh, are smaller than the discretization errors. Essentially, \cref{algorithm_one1} is applicable to other coupled nonlinear problems such as FSI problems in ALE formulation and can be used, in principle, for finite difference, finite volume or any finite element discretization.  The approximations we obtained in \cref{chap_numerical_example1} provide accuracies that are more exact than required (compare \cref{table_posteriori1}). Moreover, we have demonstrated, why, on the subset level, the Newton linearization is to be preferred to fixed-point iteration. For multi-parameter discretizations, yet, it is not clear how to improve the choice of the subsets. The problems grouped by one subset should converge to a small residual norm in one Newton step with the same initial guess.

\section*{Acknowledgements}This work was supported by the Deutsche Forschungsgemeinschaft (DFG, German Research Foundation) - 314838170, GRK 2297 MathCoRe.





\bibliographystyle{siam}

\end{document}